 \newtheorem{thm}{Theorem}[section]
\newtheorem{cor}[thm]{Corollary}
\newtheorem{prop}[thm]{Proposition}
\theoremstyle{definition}
\newtheorem{defn}[thm]{Definition}
\theoremstyle{remark}
\newtheorem{rem}[thm]{Remark}
\numberwithin{equation}{section}
\newcommand{\spn}{{\rm span}}
\begin{document}
	
	\title[On $A$-orthogonality preservation in semi-Hilbert spaces]{On $A$-orthogonality preservation and Blanco-Koldobsky-Turn\v{s}ek theorem in semi-Hilbert spaces}
	
	%\author[Manna, Barik, Paul and Sain  ]{Jayanta Manna, Somdatta Barik,  Kallol Paul and Debmalya Sain }

	\author[Manna]{Jayanta Manna}
	\address{Department of Mathematics\\
		 Jadavpur University\\ 
		 Kolkata 700032\\ 
		 West Bengal\\ 
		 INDIA}
	\email{iamjayantamanna1@gmail.com}

	\author[Barik]{Somdatta Barik}
	\address{Department of Mathematics\\ 
		Jadavpur University\\ 
		Kolkata 700032\\ 
		West Bengal\\ 
		INDIA}
	\email{bariksomdatta97@gmail.com}

	\author[Paul] {Kallol Paul}
	\address{Vice-Chancellor, Kalyani University 
		\& 
		Professor of Mathematics\\ 
		Jadavpur University (on lien) \\ 
		Kolkata \\ 
		West Bengal\\ 
		INDIA}
	
	\email{kalloldada@gmail.com}
	
	\author[Sain] {Debmalya Sain}
	\address{Department of Mathematics\\ 
		Indian Institute of Information Technology, Raichur\\ 
		Karnataka 584135 \\
		INDIA}
	\email{saindebmalya@gmail.com}

	\subjclass{Primary 46B20, 47L05;  Secondary 46C50}
	\keywords{Positive operators,  semi-Hilbert space,  preservation of $A$-orthogonality, $A$-isometry}
	
	\begin{abstract}

		 We investigate the local  preservation of $A$-orthogonality at a point by  $A$-bounded operators within the semi-Hilbertian framework induced by a positive operator $A$ on a Hilbert 
space $\mathbb{H}.$ We provide complete characterizations of such preservation. Additionally, we explore properties of the $A$-norm attainment set of an $A$-bounded operator in light of $A$-orthogonality preservation. We also study analogous properties for the minimum $A$-norm attainment set of an $A$-bounded operator. We then characterize the $A$-isometries as the $A$-norm one  operators preserving  $A$-orthogonality. Finally, we characterize those subsets of Hilbert spaces for which such preservation by an $A$-norm one operator implies that the operator is an $A$-isometry.
	\end{abstract}
	\maketitle
	\section{Introduction}
    	The study of orthogonality preservation by bounded linear operators in normed linear spaces plays a significant role because of its close relation with the isometric theory of such spaces. The well-known Blanco-Koldobsky-Turn\v{s}ek theorem \cite{BT06,K93} characterizes the isometries on a normed linear space as the norm one linear operators preserving Birkhoff-James orthogonality. In  \cite{SMP24}, a local study in case of this preservation of  orthogonality by bounded linear operator was introduced, emphasizing its role in analyzing isometries and space geometry. This was extended in \cite{MMPS25} to refine the idea using directional preservation. The purpose of this article is to study the local preservation of $A$-orthogonality  by  $A$-bounded operators within the semi-Hilbertian framework induced by a positive operator $A$ on a Hilbert space and obtain its connection with $A$-isometries. Let us now introduce some relevant notations and terminologies that will be used throughout this article.  \\

        We use $\mathbb{H}$ to refer a Hilbert space. Unless explicitly specified, we consider both real and complex Hilbert spaces. The scalar field is denoted by $\mathbb{K}(\mathbb{R}\text{ or }\mathbb{C}).$ The inner product and the corresponding norm on $\mathbb{H}$ are written as $\langle \cdot, \cdot \rangle$ and $\|\cdot\|$, respectively. For any complex number $\lambda \in \mathbb{C}$, we denote its real part by $\Re(\lambda)$ and its imaginary part by $\Im(\lambda).$ Let $\mathbb{L}(\mathbb{H})$  denote the space of all bounded  linear operators on $\mathbb{H},$ equipped with the standard operator norm. An operator $A \in \mathbb{L}(\mathbb{H})$ is called positive if $\langle Ax, x \rangle \geq 0$ for all $x \in \mathbb{H}$. It is well known that a positive operator $A$ induces a sesquilinear form $\langle \cdot, \cdot \rangle_A$ on $\mathbb{H}$, defined by $\langle x, y \rangle_A = \langle Ax, y \rangle$ for all $x, y \in \mathbb{H}$. This leads to a semi-norm $\|\cdot\|_A$, where $\|x\|_A = \sqrt{\langle Ax, x \rangle}$. From this point onward, we reserve the symbol $A$ for a positive operator on $\mathbb{H}$. The null space and the range of $A$ are denoted by $\mathcal{N}(A)$ and $\mathcal{R}(A)$, respectively. The orthogonal projection on $\overline{\mathcal{R}(A)}$ is denoted by $P_{\overline{\mathcal{R}(A)}}$.

Let the $A$-unit ball and the $A$-unit sphere of  the semi-Hilbert space $\left(\mathbb{H}, \langle \cdot, \cdot\rangle_A\right)$ be denoted by $B_{\mathbb{H}(A)}$ and $S_{\mathbb{H}(A)}$, respectively, i.e.,
\[
B_{\mathbb{H}(A)} = \{x \in \mathbb{H} : \|x\|_A \leq 1\}, \quad S_{\mathbb{H}(A)} = \{x \in \mathbb{H} : \|x\|_A = 1\}.
\]
An operator $T \in \mathbb{L}(\mathbb{H})$ is said to be $A$-bounded if there exists a positive constant $c$ such that $ \|Tx\|_A \leq c \|x\|_A$  for all $x \in \mathbb{H}.$ The set of all such operators is denoted by $B_{A^{1/2}}(\mathbb{H})$, i.e.,
\[
B_{A^{1/2}}(\mathbb{H}) = \{T \in \mathbb{L}(\mathbb{H}) : \text{ there exists } c > 0 \text{ such that } \|Tx\|_A \leq c\|x\|_A \ \forall x \in \mathbb{H} \}.
\]
For any $T \in B_{A^{1/2}}(\mathbb{H})$, the $A$-norm is given by
\[
\|T\|_A = \sup_{\|x\|_A = 1} \|Tx\|_A = \sup \{ |\langle Tx, y \rangle_A| : x, y \in \mathbb{H}, \ \|x\|_A = \|y\|_A = 1 \}.
\]
Correspondingly, the minimum $A$-norm of $T$ is denoted by $m_A(T)$, and is given by
\[
m_A(T) = \inf_{\|x\|_A = 1} \|Tx\|_A = \inf \{ |\langle Tx, y \rangle_A| : x, y \in \mathbb{H}, \ \|x\|_A = \|y\|_A = 1 \}.
\]
Let $T \in B_{A^{1/2}}(\mathbb{H}).$ The $A$-norm attainment set and the minimum $A$-norm attainment set of $T$ are, respectively,
\[
M_T^A = \{ x \in \mathbb{H} : \|x\|_A = 1,\ \|Tx\|_A = \|T\|_A \},\]
\[m_T^A = \{ x \in \mathbb{H} : \|x\|_A = 1,\ \|Tx\|_A = m_A(T) \}.
\]

Let $T \in \mathbb{L}(\mathbb{H}).$ An operator $W \in \mathbb{L}(\mathbb{H})$ is called an $A$-adjoint of $T$ if $\langle Tx,y\rangle_A=\langle x,Wy\rangle_A$ for all $x, y\in \mathbb{H}$. By Douglas theorem \cite{D66}, the set of all operators which admit
an $A$-adjoint is denoted by $B_A(\mathbb{H})$, and is given by
\[B_A(\mathbb{H})=\{T \in \mathbb{L}(\mathbb{H}): \mathcal{R}(T^*A)\subset\mathcal{R}(A)\}.\]
If $T\in B_A(\mathbb{H})$ then the reduced solution of the equation $AX=T^*A$ has a unique solution which is denoted by $T^{\sharp}$ and satisfies $\mathcal{R}(T^\sharp)\subset \overline{\mathcal{R}(A)}$. Note that $T^\sharp=A^\dagger T^*A$, where $A^\dagger$ is the Moore-Penrose inverse of $A,$ see \cite{EN81,P55}. Moreover, $B_A(\mathbb{H})\subset B_{A^{1/2}}(\mathbb{H})\subset\mathbb{L}(\mathbb{H})$. We refer the readers to \cite{ACG09, ACG08, ACG_IEOT_08, SBP24,SSP_AFA_21,SSP_BSM_21,Z19} for further developments in this direction.

        Let us recall the  definition of  $A$-orthogonal from \cite{ACG09}.
       \begin{defn}
 An element $x \in \mathbb{H}$ is $A$-orthogonal to another element $y \in \mathbb{H}$, written as $x \perp_A y$, if the inner product $\langle Ax, y \rangle = 0.$
\end{defn}
When $A$ is the identity operator $I$, this definition aligns with the inner product orthogonality in Hilbert spaces. Observe that, similar to the inner product orthogonality, the $A$-orthogonality relation is homogeneous, symmetric, and both left and right additive. However, for non-zero elements $u, v \in \mathbb{H}$, there may exist multiple scalars $\alpha$ such that $\alpha u + v \perp_A u$. It is important to note that, if $\|u\|_A \neq 0$, then there cannot be more than one such scalar $\alpha$ satisfying $\alpha u + v\perp_A u$.
 It is straightforward to observe that for a positive operator $A$, this orthogonality condition is equivalent to the inequality $\|x + \lambda y\|_A \geq \|x\|_A$  for all scalars $\lambda \in \mathbb{K}$. 
 For  $A=I$,  this corresponds to the notion of Birkhoff-James orthogonality \cite{B35,J47}, i.e., $\|x + \lambda y\| \geq \|x\|$  for all scalars $\lambda \in \mathbb{K}.$ For a comprehensive discussion on Birkhoff-James orthogonality in normed linear spaces, see \cite{MPS24}. Next, we recall the following geometrical notions, introduced in \cite{SSP_BSM_21},  which plays a very crucial role in our exploration.
   Let  $U=\{\alpha\in \mathbb{C}: |\alpha|=1, \arg(\alpha)\in [0,\pi)\}.$ For $x\in \mathbb{H}$ and for $\alpha \in U,$
   \[(x)_{\alpha}^{A+}=\{y\in \mathbb{H}:  \|x + \lambda y\|_A \geq \|x\|_A \text{ for all } \lambda= t\alpha, t\geq0\},\]
   \[(x)_{\alpha}^{A-}=\{y\in \mathbb{H}:  \|x + \lambda y\|_A \geq \|x\|_A \text{ for all } \lambda= t\alpha, t\leq0\},\]
   \[x^{\perp_{\alpha}^A}=\{y\in \mathbb{H}:  \|x + \lambda y\|_A \geq \|x\|_A \text{ for all } \lambda= t\alpha, t\in \mathbb{R}\}.\]
 If $y\in x^{\perp_{\alpha}^A},$ then we write $x\perp_{\alpha}^A y.$ In complex Hilbert spaces, the notions $x^{A+}$, $x^{A-}$ and $x^{\perp_A}$ are defined in the following way:
 \[x^{A+}=\bigcap_{\alpha \in U} (x)_{\alpha}^{A+},~x^{A-}=\bigcap_{\alpha \in U} (x)_{\alpha}^{A-} \text{ and }x^{\perp_A}=\bigcap_{\alpha \in U} x^{\perp_{\alpha}^A}.\]
 For real Hilbert spaces 
 $x^{A+}$, $x^{A-}$ and $x^{\perp_A}$ are defined in the following way:
   \[x^{A+}=\{y\in \mathbb{H}:  \|x + \lambda y\|_A \geq \|x\|_A \text{ for all } \lambda\geq0\},\]
   \[x^{A-}=\{y\in \mathbb{H}:  \|x + \lambda y\|_A \geq \|x\|_A \text{ for all } \lambda\leq0\},\]
   \[x^{\perp_A}=\{y\in \mathbb{H}:  \|x + \lambda y\|_A \geq \|x\|_A \text{ for all } \lambda\in \mathbb{R}\}.\]
   
An operator $T\in B_{A^{1/2}}(\mathbb{H})$ is said to preserve $A$-orthogonality at $x\in \mathbb{H}$ if for any $y\in \mathbb{H},$ $x\perp_A y$ implies that $Tx\perp_A Ty,$ i.e., $\langle x,y\rangle_A=0\implies \langle Tx,Ty\rangle_A=0.$
An operator $ T\in B_{A}(\mathbb{H})$ is called an $A$-isometry if and only if $T^\sharp T=P_{\overline{\mathcal{R}(A)}}$, see \cite{ACG08}. In \cite{SMP24}, the concept of a $\mathcal{K}$-set was introduced for normed linear spaces as a subset of the unit sphere for which the preservation of Birkhoff-James orthogonality by a bounded linear operator at each point implies that the operator is a scalar multiple of an isometry.   In the same spirit, we introduce the following:

 \begin{defn}
 	A set $D\subset S_{\mathbb{H}(A)}$ is said to be a $\mathcal K_A$-set,  if any operator $T\in B_{A}(\mathbb{H})$ that preserves $A$-orthogonality at each point of $D,$  is necessarily a scalar multiple of an $A$-isometry.
 \end{defn}

 \begin{defn}
 	 A set $D\subset S_{\mathbb{H}(A)}$ is said to be a minimal $\mathcal K_A$-set,  if any $\mathcal K_A$-set  $B\subset D$ implies that $B=D.$
 \end{defn}
  The central aim of this article is to study the local preservation of $A$-orthogonality at a point by $A$-bounded operators on a Hilbert space $\mathbb{H}$.  We begin by providing a complete characterization of this local preservation of $A$-orthogonality. Subsequently, we examine various properties of the $A$-norm attainment sets and the minimum $A$-norm attainment sets of $A$-bounded operators, particularly in connection with the preservation of $A$-orthogonality. We also investigate the sets $x^{A+}$ and $x^{A-}$ from the perspective of $A$-orthogonality preservation.  Further characterization of local $A$-orthogonality preservation is obtained in terms of $A$-eigenvectors of the operator $T^\sharp T$. This leads us to identify $A$-isometries as precisely those $A$-norm one operators that preserve $A$-orthogonality. Finally, we provide a complete characterization of $\mathcal{K}_A$-sets in semi-Hilbert spaces.

\section{Main Results}
We begin by establishing a characterization for the directional preservation of $A$-orthogonality at a point by $A$-bounded operators on complex Hilbert spaces. For this purpose, we first obtain the following proposition.

\begin{prop}\label{def alpha}
Let $\mathbb{H}$ be a complex Hilbert space and let $x\in \mathbb{H}$ be such that $\|x\|_A\neq 0.$ Then for any $\alpha\in U,$
\begin{itemize}
    \item[(i)] $(x)_{\alpha}^{A+}=\{y\in \mathbb{H}:  \Re(\alpha\langle y,x \rangle_A)\geq 0\},$
    \item[(ii)] $(x)_{\alpha}^{A-}=\{y\in \mathbb{H}:  \Re(\alpha\langle y,x \rangle_A)\leq 0\},$
    \item[(iii)] $x^{\perp_{\alpha}^A}=\{y\in \mathbb{H}:  \Re(\alpha\langle y,x \rangle_A)= 0\},$
    \item[(iv)] $x^{A+}=\{y\in \mathbb{H}:  \Im(\langle y,x \rangle_A)\leq 0=\Re(\langle y,x \rangle_A)\},$
    \item[(v)] $x^{A-}=\{y\in \mathbb{H}:  \Im(\langle y,x \rangle_A)\geq 0=\Re(\langle y,x \rangle_A)\}.$
\end{itemize}
\end{prop}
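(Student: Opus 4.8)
The plan is to reduce every condition defining these sets to a statement about the sign of a real quadratic in the real parameter $t$, and then, for the last two parts, to carry out the intersection over $\alpha\in U$ explicitly. First I would expand the squared $A$-seminorm. Using the sesquilinearity of $\langle\cdot,\cdot\rangle_A$ together with the conjugate symmetry $\langle x,y\rangle_A=\overline{\langle y,x\rangle_A}$ (which holds because $A$ is positive, hence self-adjoint), one obtains
\[\|x+\lambda y\|_A^2=\|x\|_A^2+2\Re\big(\lambda\langle y,x\rangle_A\big)+|\lambda|^2\|y\|_A^2.\]
Since $\|x+\lambda y\|_A\geq\|x\|_A$ is equivalent to $\|x+\lambda y\|_A^2\geq\|x\|_A^2$, and since $\lambda=t\alpha$ with $|\alpha|=1$ gives $|\lambda|^2=t^2$, the defining inequality becomes
\[2t\,\Re\big(\alpha\langle y,x\rangle_A\big)+t^2\|y\|_A^2\geq 0.\]
Writing $r=\Re(\alpha\langle y,x\rangle_A)$ and $s=\|y\|_A^2\geq 0$, this reads $t(2r+ts)\geq 0$.

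For part (i), I would restrict to $t\geq 0$: dividing by $t>0$ and letting $t\to 0^+$ forces $r\geq 0$, while conversely $r\geq 0$ together with $s\geq 0$ makes $2r+ts\geq 0$ for every $t\geq 0$, giving the stated equality. Part (ii) is identical with $t\leq 0$, where the sign of $t$ reverses the required inequality and yields $r\leq 0$. Part (iii) is then the intersection of (i) and (ii), forcing $r=0$; alternatively, the quadratic $t\mapsto 2tr+t^2s$ is nonnegative on all of $\mathbb{R}$ precisely when $r=0$.

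The genuinely delicate step is passing to $x^{A+}$ and $x^{A-}$, where I must intersect over all $\alpha=e^{i\theta}$ with $\theta\in[0,\pi)$. Setting $a=\Re(\langle y,x\rangle_A)$ and $b=\Im(\langle y,x\rangle_A)$, part (i) turns the membership $y\in x^{A+}$ into the requirement $a\cos\theta-b\sin\theta\geq 0$ for every $\theta\in[0,\pi)$. Evaluating at $\theta=0$ gives $a\geq 0$; because the arc is half-open, I would invoke continuity and let $\theta\to\pi^-$, where $a\cos\theta-b\sin\theta\to -a$, forcing $a\leq 0$ and hence $a=0$. With $a=0$ the requirement collapses to $-b\sin\theta\geq 0$ on $(0,\pi)$, where $\sin\theta>0$, so $b\leq 0$; this establishes (iv). Part (v) follows the same route with the reversed inequality, yielding $a=0$ and $b\geq 0$. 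The main obstacle is precisely this boundary analysis over the half-open arc $[0,\pi)$: one cannot simply substitute $\theta=\pi$, so the constraint $\Re(\langle y,x\rangle_A)=0$ has to be extracted as a limit rather than from an attained endpoint.
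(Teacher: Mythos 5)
Your proposal is correct and follows essentially the same route as the paper's proof: both expand $\|x+t\alpha y\|_A^2$ into the real quadratic $2t\,\Re(\alpha\langle y,x\rangle_A)+t^2\|y\|_A^2$ to read off the sign conditions in (i)--(iii), and both obtain (iv)--(v) by testing $\theta=0$ and taking the limit $\theta\to\pi^-$ over the half-open arc $[0,\pi)$, which is exactly the boundary subtlety you flag. The only cosmetic differences are that you extract $\Re(\alpha\langle y,x\rangle_A)\geq 0$ via the limit $t\to 0^+$ where the paper argues by contradiction with the specific choice $t=-\Re(\alpha\langle u,x\rangle_A)/\|u\|_A^2$, and that you deduce $\Im(\langle y,x\rangle_A)\leq 0$ after first establishing $\Re(\langle y,x\rangle_A)=0$ rather than testing $\theta=\pi/2$ as the paper does.
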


\begin{proof}
(i) Let $\alpha \in U$ and let $u\in (x)_{\alpha}^{A+}.$ Then for any $t\geq 0,$ we have
\begin{eqnarray*}
   \|x+t\alpha u\|_A^2\geq \|x\|_A^2
    &\implies& \langle x+t\alpha u, x+t\alpha u\rangle_A \geq \|x\|_A^2\\
    &\implies& 2t \Re(\alpha \langle u, x\rangle_A) + t^2 \|u\|_A^2\geq 0.
\end{eqnarray*}
If $\|u\|_A=0$ then $\Re(\alpha \langle u, x\rangle_A)= 0.$ Suppose that $\|u\|_A\neq0.$
We show that $\Re(\alpha \langle u, x\rangle_A)\geq 0.$ On the contrary suppose that $\Re(\alpha \langle u, x\rangle_A)< 0.$ Consider 
\[t = -\frac{\Re(\alpha \langle u, x\rangle_A)} {\|u\|_A^2}>0.\]
Then we have
\begin{eqnarray*}
&&-2\frac{\Re(\alpha \langle u, x\rangle_A)} {\|u\|_A^2} \Re(\alpha \langle u, x\rangle_A) + \frac{\big(\Re(\alpha \langle u, x\rangle_A)\big)^2} {\|u\|_A^4} \|u\|_A^2\geq 0\\
&\implies&-\frac{\big(\Re(\alpha \langle u, x\rangle_A)\big)^2} {\|u\|_A^2}\geq 0.
\end{eqnarray*}
This is a contradiction. Thus, $\Re(\alpha \langle u, x\rangle_A)\geq 0.$
Therefore, $(x)_{\alpha}^{A+}\subset\{y\in \mathbb{H}:  \Re(\alpha\langle y,x \rangle_A)\geq 0\}.$

Next, let $v\in \{y\in \mathbb{H}:  \Re(\alpha\langle y,x \rangle_A)\geq 0\}.$ Then $\Re(\alpha\langle v,x \rangle_A)\geq 0$ and so for any 
$t\geq 0,$
\begin{eqnarray*}
   \|x+t\alpha v\|_A^2
    &=& \langle x+t\alpha v, x+t\alpha v\rangle_A \\
    &=&  \|x\|_A^2+2t \Re(\alpha \langle v, x\rangle_A) + t^2 \| v\|_A^2\\
    &\geq& \|x\|_A^2.
\end{eqnarray*}
Thus, $v\in (x)_{\alpha}^{A+}$ and consequently, 
$\{y\in \mathbb{H}:  \Re(\alpha\langle y,x \rangle_A)\geq 0\}\subset (x)_{\alpha}^{A+}.$ Therefore,
\[(x)_{\alpha}^{A+}=\{y\in \mathbb{H}:  \Re(\alpha\langle y,x \rangle_A)\geq 0\}.\]

(ii) The proof of (ii) is similar to the proof of (i).

(iii) The proof of (iii) directly follows from  (i) and (ii).

(iv)  Let $u\in \{y\in \mathbb{H}:  \Im(\langle y,x \rangle_A)\leq 0=\Re(\langle y,x \rangle_A)\}.$  Then,
    \begin{eqnarray*}
       && \Im(\langle u,x \rangle_A)\leq 0=\Re(\langle u,x \rangle_A)\\
        &\implies&  \Re(\langle u,x \rangle_A) \cos{\theta} - \Im(\langle u,x \rangle_A) \sin{\theta}\geq 0\text{ for all } 0\leq \theta < \pi\\
        &\implies&  \Re(\alpha \langle u,x \rangle_A) \geq 0 \text{ for all } \alpha \in U.
    \end{eqnarray*}
    From (i), it follows that $u\in (x)_{\alpha}^{A+}$ for all $\alpha \in U.$ Thus, 
    \[u\in \bigcap\limits_{\alpha \in U} (x)_{\alpha}^{A+}=x^{A+}.\]

    Conversely, let $v\in x^{A+}.$ Then it follows from (i)  that 
		$\Re(\alpha\langle v,x \rangle_A)\geq0$ for all $\alpha\in U.$
        This implies that for all $0\leq \theta < \pi,$
        \[ \Re(\langle v,x \rangle_A) \cos{\theta} - \Im(\langle v,x \rangle_A) \sin{\theta}\geq 0.\]
    At $\theta =0$ and $\theta =\frac{\pi}{2},$ we have 
    $\Re(\langle v,x \rangle_A) \geq 0 \text{ and } \Im(\langle v,x \rangle_A) \leq 0, $ respectively.
    Also by taking limit as $\theta\longrightarrow \pi-,$ we have 
    $ \Re(\langle v,x \rangle_A) \leq 0.$
    Thus, $\Im(\langle v,x \rangle_A)\leq 0=\Re(\langle v,x \rangle_A)$ and so $v\in \{y\in \mathbb{H}:  \Im(\langle y,x \rangle_A)\leq 0=\Re(\langle y,x \rangle_A)\}.$
    
(v) The proof of (v) is similar to the proof of (iv).
\end{proof}
 
Next, we have the desired characterization.
\begin{thm}\label{levelal}
    Let $\mathbb{H}$ be a complex  Hilbert space and let $T\in B_{A^{1/2}}(\mathbb{H}).$ Suppose that $x\in \mathbb{H}$ such that $\|x\|_A\neq 0.$  Then  for each $\alpha \in U,$ the following are equivalent:
    \begin{itemize}
        \item[(i)] $T \big(x^{\perp_{\alpha}^A}\big)\subset (Tx)^{\perp_{\alpha}^A}.$
        \item[(ii)] $\Re(\alpha \langle Ty,Tx\rangle_A)= \frac {\|Tx\|_A^2}{\|x\|_A^2} \Re(\alpha \langle y,x\rangle_A) \text{ for all } y\in \mathbb{H}.$
    \end{itemize}
\end{thm}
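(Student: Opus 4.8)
The plan is to prove the equivalence by using the characterization of $x^{\perp_\alpha^A}$ from Proposition~\ref{def alpha}(iii), which tells us that $y \in x^{\perp_\alpha^A}$ exactly when $\Re(\alpha\langle y,x\rangle_A)=0$, and similarly $Ty \in (Tx)^{\perp_\alpha^A}$ exactly when $\Re(\alpha\langle Ty,Tx\rangle_A)=0$. So condition (i) is the geometric statement that $\Re(\alpha\langle y,x\rangle_A)=0$ forces $\Re(\alpha\langle Ty,Tx\rangle_A)=0$, and the whole theorem reduces to showing that this implication between two real-linear functionals of $y$ is equivalent to the proportionality in (ii).

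First I would prove the easy direction (ii)$\Rightarrow$(i): if the identity in (ii) holds for all $y$, then whenever $\Re(\alpha\langle y,x\rangle_A)=0$ the right-hand side vanishes, so $\Re(\alpha\langle Ty,Tx\rangle_A)=0$, giving $Ty\in(Tx)^{\perp_\alpha^A}$ directly. The substance is in (i)$\Rightarrow$(ii). Here I would set up the two real-valued functions $f(y)=\Re(\alpha\langle Ty,Tx\rangle_A)$ and $g(y)=\Re(\alpha\langle y,x\rangle_A)$, both of which are $\mathbb{R}$-linear in $y$. Condition (i) says precisely that $\ker g \subseteq \ker f$ (as real-linear functionals on $\mathbb{H}$ viewed as a real vector space). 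The standard linear-algebra fact that one linear functional vanishing on the kernel of another forces it to be a scalar multiple then gives $f = c\, g$ for some real constant $c$, provided $g$ is not identically zero.

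To pin down the constant $c$ and handle the non-degeneracy, I would evaluate at $y=x$. Since $\|x\|_A\neq 0$ we have $g(x)=\Re(\alpha\langle x,x\rangle_A)=\Re(\alpha)\|x\|_A^2$, which is generally nonzero (so $g\not\equiv 0$), and $f(x)=\Re(\alpha\langle Tx,Tx\rangle_A)=\Re(\alpha)\|Tx\|_A^2$. Matching $f=cg$ at $y=x$ gives $c=\|Tx\|_A^2/\|x\|_A^2$, which is exactly the proportionality constant in (ii). The one delicate point I anticipate is the case $\Re(\alpha)=0$, i.e.\ $\alpha=i$, where evaluation at $y=x$ no longer determines $c$ because $g(x)=0$; then I would instead choose a test vector $y_0$ with $\Re(\alpha\langle y_0,x\rangle_A)\neq 0$ (such a vector exists since $g\not\equiv0$, as $g(y)=\Re(\langle \alpha A y, x\rangle)$ and $Ax\neq 0$ because $\|x\|_A\neq0$) and compute $c$ from $f(y_0)/g(y_0)$. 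Establishing that $g$ is genuinely nonzero, and thus that the scalar-multiple conclusion applies, is the main technical obstacle; once $c$ is identified the identity $f=cg$ is exactly statement (ii).

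I expect the hardest part to be organizing the degenerate subtleties cleanly: confirming $g\not\equiv 0$ from $\|x\|_A\neq0$, and ensuring the constant $c$ is correctly extracted uniformly in $\alpha$ (including $\alpha=i$). An alternative to the abstract kernel-containment argument, which may read more transparently, is to note that both $f$ and $g$ factor through the single complex quantity $\langle y,x\rangle_A$: one can write $y = \beta x + w$ with $w\perp_A x$ and track how $f$ and $g$ depend on the complex scalar $\beta$, reducing everything to a two-real-dimensional computation. Either route converges on the same proportionality, and I would present whichever makes the treatment of $\alpha=i$ least awkward.
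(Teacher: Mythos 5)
Your argument is correct in substance but follows a genuinely different route from the paper's. The paper proves (i)$\Rightarrow$(ii) via the concrete decomposition $\mathbb{H}=\spn~\{x\}\oplus (Ax)^{\perp}$: writing $y=\lambda x+v$ with $\langle v,x\rangle_A=0$, hypothesis (i) is applied only to the complex hyperplane $(Ax)^{\perp}$ (which sits inside $x^{\perp_{\alpha}^A}$) to kill the cross term $\Re(\alpha\langle Tv,Tx\rangle_A)$, and the constant $\|Tx\|_A^2/\|x\|_A^2$ then drops out of $\Re(\alpha\lambda\|Tx\|_A^2)=\|Tx\|_A^2\Re(\alpha\lambda)$, uniformly in $\alpha$ and with no nondegeneracy discussion. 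You instead encode (i), via Proposition \ref{def alpha}(iii), as the kernel containment $\ker g\subseteq\ker f$ for the real-linear functionals $f(y)=\Re(\alpha\langle Ty,Tx\rangle_A)$ and $g(y)=\Re(\alpha\langle y,x\rangle_A)$, and invoke the scalar-multiple lemma. That is more conceptual --- it isolates exactly what (i) says and uses all of $\ker g$, not just $(Ax)^{\perp}$ --- but it obliges you to check $g\not\equiv 0$ and to identify the constant $c$, which is where your only real soft spot lies. (A minor point: Proposition \ref{def alpha}(iii) is stated for nonzero $A$-norm, so applying it to $Tx$ when $\|Tx\|_A=0$ needs the one-line observation that then $ATx=0$, so that $f\equiv 0$ and (ii) holds trivially with constant $0$; the paper's computation never needs this case split.)

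The soft spot is the identification of $c$ when $\Re(\alpha)=0$, i.e.\ $\alpha=i$. You propose to ``compute $c$ from $f(y_0)/g(y_0)$'' for some $y_0$ with $g(y_0)\neq 0$, but that merely names $c$; it does not show $c=\|Tx\|_A^2/\|x\|_A^2$, which is what (ii) asserts, and for a generic $y_0$ you cannot evaluate $f(y_0)$ in closed form. The repair is one line and removes the case split entirely: take $y_0=\bar{\alpha}x$. Then $g(\bar{\alpha}x)=\Re(\alpha\bar{\alpha})\|x\|_A^2=\|x\|_A^2\neq 0$ (which also settles $g\not\equiv 0$ for every $\alpha\in U$, more cleanly than your remark via $Ax\neq 0$), and by complex homogeneity of $T$ and of the first slot of $\langle\cdot,\cdot\rangle_A$, $f(\bar{\alpha}x)=\Re(\alpha\bar{\alpha}\|Tx\|_A^2)=\|Tx\|_A^2$, giving $c=\|Tx\|_A^2/\|x\|_A^2$ uniformly in $\alpha$. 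With that substitution your proof is complete; your suggested alternative (writing $y=\beta x+w$ with $w\perp_A x$ and tracking the dependence on $\beta$) is precisely the paper's argument.
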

\begin{proof}
    Sufficient part of the  theorem is obvious. We only prove the necessary part. Let $T \big(x^{\perp_{\alpha}^A}\big)\subset (Tx)^{\perp_{\alpha}^A}.$ Since $\|x\|_A\neq 0,$ it follows that  $\langle Ax,x\rangle\neq 0.$ Now, $\mathbb{H}=\spn~ \{x\}\oplus (Ax)^{\perp}.$ Let $y\in \mathbb{H}.$ Then  $y = \lambda x + v$  for some scalar $\lambda$ and some $v\in (Ax)^{\perp}.$ Hence $\langle v,x\rangle_A=0$ and so $\Re( \alpha\langle v,x\rangle_A)=0.$  Since $T \big(x^{\perp_{\alpha}^A}\big)\subset (Tx)^{\perp_{\alpha}^A},$ it follows that $\Re( \alpha\langle Tx,Tv\rangle_A)=0.$ Now,
    \[\Re(\alpha\langle y,x\rangle_A)=\Re(\alpha\langle \lambda x + v, x\rangle_A)=\|x\|_A^2\Re(\alpha\lambda)  .\]
    Then,
    \begin{eqnarray*}
        \Re(\alpha \langle Ty,Tx\rangle_A)&=&\Re(\alpha\langle \lambda Tx + Tv, Tx \rangle_A)\\
        &=&\Re(\alpha\lambda \|Tx\|_A^2)+\Re(\alpha\langle Tv, Tx\rangle_A)\\
         &=&\|Tx\|_A^2 \Re(\alpha\lambda )+\Re(\alpha\langle Tv, Tx\rangle_A)\\
        &=&\frac {\|Tx\|_A^2}{\|x\|_A^2}\Re(\alpha\langle y,x\rangle_A).
    \end{eqnarray*}
    As $y$ is chosen arbitrarily, the result holds for all $y \in \mathbb{H}$. Hence,
	\[\Re(\alpha \langle Ty,Tx\rangle_A)= \frac {\|Tx\|_A^2}{\|x\|_A^2} \Re(\alpha \langle y,x\rangle_A) \text{ for all } y\in \mathbb{H}\]
    This completes the proof of the theorem.
\end{proof}
We now present a characterization of the local preservation of  $A$-orthogonality at a point by $A$-bounded operators on arbitrary Hilbert spaces.
\begin{thm}\label{level}
    Let $T\in B_{A^{1/2}}(\mathbb{H}).$ Suppose that $x\in \mathbb{H}$ such that $\|x\|_A\neq 0.$  Then  $T$ preserves $A$-orthogonality at $x$ if and only if  
			 \[\langle Ty, Tx \rangle_A= \frac {\|Tx\|_A^2}{\|x\|_A^2} \langle y,x \rangle_A \text{ for all } y\in \mathbb{H}.\]
\end{thm}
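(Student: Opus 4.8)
The plan is to reduce the real‐scalar inner‐product identity to the directional characterization already established in Theorem~\ref{levelal}. First I would observe that $A$-orthogonality is the symmetric relation $\langle x,y\rangle_A=0$, and that $x^{\perp_A y}$ unpacks, via the definitions preceding the statement, as membership in $x^{\perp_\alpha^A}$ for every $\alpha\in U$ (in the complex case) or as the single real condition (in the real case). So the hypothesis that $T$ preserves $A$-orthogonality at $x$ says precisely that $\langle y,x\rangle_A=0$ forces $\langle Ty,Tx\rangle_A=0$, i.e. $T\bigl(x^{\perp_A}\bigr)\subset (Tx)^{\perp_A}$.

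For the complex case, the key step is to note that preservation of $A$-orthogonality at $x$ is equivalent to $T\bigl(x^{\perp_\alpha^A}\bigr)\subset (Tx)^{\perp_\alpha^A}$ holding simultaneously for all $\alpha\in U$. Indeed $y\in x^{\perp_A}$ iff $\langle y,x\rangle_A=0$ iff $\Re(\alpha\langle y,x\rangle_A)=0$ for every $\alpha\in U$, and likewise on the image side. Applying Theorem~\ref{levelal} for each $\alpha$ then yields
\[\Re\bigl(\alpha\langle Ty,Tx\rangle_A\bigr)=\frac{\|Tx\|_A^2}{\|x\|_A^2}\,\Re\bigl(\alpha\langle y,x\rangle_A\bigr)\quad\text{for all }y\in\mathbb{H},\ \alpha\in U.\]
Since a complex number is determined by the collection of the real parts $\Re(\alpha z)$ as $\alpha$ ranges over $U$ (taking $\alpha=1$ recovers the real part and $\alpha=i$, or rather $\alpha$ of argument $\pi/2$, recovers the imaginary part up to sign), this forces the full complex identity $\langle Ty,Tx\rangle_A=\frac{\|Tx\|_A^2}{\|x\|_A^2}\langle y,x\rangle_A$ for all $y$. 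The converse is immediate: the displayed identity makes $\langle y,x\rangle_A=0$ imply $\langle Ty,Tx\rangle_A=0$.

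For the real case I would run the decomposition argument directly, mirroring the proof of Theorem~\ref{levelal}: since $\|x\|_A\neq0$ we have $\langle Ax,x\rangle\neq0$, so $\mathbb{H}=\spn\{x\}\oplus(Ax)^{\perp}$, and any $y$ writes as $y=\lambda x+v$ with $\langle v,x\rangle_A=0$. Preservation at $x$ gives $\langle Tv,Tx\rangle_A=0$, and expanding $\langle Ty,Tx\rangle_A=\lambda\|Tx\|_A^2$ while $\langle y,x\rangle_A=\lambda\|x\|_A^2$ delivers the identity. This single computation covers the real field without invoking the directional sets at all, and in fact also gives a unified proof if one is willing to carry complex $\lambda$ through the splitting.

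I expect the main obstacle to be the passage from the family of real‐part equalities back to the single complex equality in the complex case, i.e. justifying that $\Re(\alpha z_1)=\Re(\alpha z_2)$ for all $\alpha\in U$ forces $z_1=z_2$. This is elementary but must be stated carefully, since $U$ only contains arguments in $[0,\pi)$ rather than the full circle; one checks that $\alpha=1$ and an $\alpha$ of argument $\pi/2$ already pin down both real and imaginary parts, so the restricted range of $\alpha$ is harmless. An alternative that sidesteps this bookkeeping entirely is to use the real‐case decomposition argument verbatim, allowing $\lambda\in\mathbb{C}$, which is why I would most likely present the clean subspace‐splitting proof as the primary route and mention the reduction to Theorem~\ref{levelal} as the conceptual motivation.
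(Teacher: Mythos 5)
Your primary route---decomposing $\mathbb{H}=\spn~\{x\}\oplus (Ax)^{\perp}$, writing $y=\lambda x+v$ with $\lambda\in\mathbb{K}$ and $v\perp_A x$, using preservation at $x$ to get $\langle Tv,Tx\rangle_A=0$, and expanding---is exactly the paper's proof of Theorem \ref{level}, valid uniformly over the real and complex fields, and it is complete as you state it.

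One caution about the alternative you offer as conceptual motivation: your claimed equivalence between preservation of $A$-orthogonality at $x$ and the inclusions $T\bigl(x^{\perp_{\alpha}^A}\bigr)\subset (Tx)^{\perp_{\alpha}^A}$ for all $\alpha\in U$ is justified by your argument in only one direction. The identity $x^{\perp_A}=\bigcap_{\alpha\in U}x^{\perp_{\alpha}^A}$ shows that the family of per-$\alpha$ inclusions implies preservation, but not conversely: each $x^{\perp_{\alpha}^A}=\{y\in\mathbb{H}:\Re(\alpha\langle y,x\rangle_A)=0\}$ is a real hyperplane strictly larger than the complex hyperplane $x^{\perp_A}$, and knowing $T\bigl(\bigcap_{\alpha}x^{\perp_{\alpha}^A}\bigr)\subset\bigcap_{\alpha}(Tx)^{\perp_{\alpha}^A}$ does not yield $T\bigl(x^{\perp_{\alpha}^A}\bigr)\subset (Tx)^{\perp_{\alpha}^A}$ for each individual $\alpha$. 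The hypothesis of preservation at $x$ says nothing directly about vectors in $x^{\perp_{\alpha}^A}\setminus x^{\perp_A}$; the only way to reach them is the decomposition computation, which already proves the full identity, so the reduction to Theorem \ref{levelal} is circular if taken as the actual proof rather than as motivation. Since you explicitly elect the subspace-splitting argument as your primary proof and relegate the reduction to commentary, the proposal stands; your final remark about reconstructing a complex number from $\Re(\alpha z)$, $\alpha\in U$ (using $\alpha=1$ and an $\alpha$ of argument $\pi/2$), is correct but becomes unnecessary once the direct argument is adopted.
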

\begin{proof}
    We only prove the necessary part as the sufficient part  is obvious. Suppose  $T$ preserves $A$-orthogonality at $x.$ Now, $\mathbb{H}$ can be expressed as  $\mathbb{H}=\spn~ \{x\}\oplus (Ax)^{\perp}.$ Let $y\in \mathbb{H}.$  Then there exist $\alpha\in \mathbb{K}$ and $v\in (Ax)^{\perp}$ such that   $y = \alpha x + v.$  Hence $\langle v,x\rangle_A=0$ and so $\langle Tv,Tx\rangle_A=0.$   Now,
	\[\langle y,x\rangle_A=\langle \alpha x + v, x\rangle_A=\alpha \|x\|_A^2 .\]
	Then,
	\[\langle Ty,Tx\rangle_A=\langle \alpha  Tx + Tv, Tx \rangle_A
		=\alpha \|Tx\|_A^2
	=\frac {\|Tx\|_A^2}{\|x\|_A^2}\langle y,x\rangle_A.\]
Since $y\in \mathbb{H}$ is chosen arbitrarily, it follows that 
\[\langle Ty, Tx \rangle_A= \frac {\|Tx\|_A^2}{\|x\|_A^2} \langle y,x \rangle_A \text{ for all } y\in \mathbb{H}.\]
	This completes the proof.
\end{proof}
By using the above result, we derive the following corollary. To proceed, we recall from \cite{ACG08} that for  any $T\in B_{A^{1/2}}(\mathbb{H}),$ the set $\mathcal{N}_A(T)$ defined as $\{u\in \mathbb{H}: \|Tu\|_A=0\}.$
\begin{cor}
     Let $T\in B_{A^{1/2}}(\mathbb{H})$ and let  $x\in \mathbb{H}.$ If any $T$ preserves $A$-orthogonality at $x$ then either $\|Tx\|_A=0$ or $\mathcal{N}_A(T)\subset x^{\perp_A}.$ 
\end{cor}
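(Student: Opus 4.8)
The plan is to apply Theorem~\ref{level} directly and then extract the dichotomy. Suppose $T$ preserves $A$-orthogonality at $x$. If $\|Tx\|_A = 0$ we are immediately in the first alternative, so assume $\|Tx\|_A \neq 0$; in particular $\|x\|_A \neq 0$, since $\|Tx\|_A \leq \|T\|_A \|x\|_A$ forces $\|x\|_A \neq 0$ whenever $\|Tx\|_A \neq 0$. Thus the hypothesis of Theorem~\ref{level} is met, and we obtain
\[
\langle Ty, Tx \rangle_A = \frac{\|Tx\|_A^2}{\|x\|_A^2}\,\langle y, x \rangle_A \quad \text{for all } y \in \mathbb{H}.
\]

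Next I would feed an arbitrary $u \in \mathcal{N}_A(T)$ into this identity. By definition of $\mathcal{N}_A(T)$ we have $\|Tu\|_A = 0$, and since $\|\cdot\|_A$ is a seminorm arising from the positive semidefinite form $\langle \cdot, \cdot \rangle_A$, the Cauchy--Schwarz inequality $|\langle Tu, Tx \rangle_A| \leq \|Tu\|_A \|Tx\|_A$ gives $\langle Tu, Tx \rangle_A = 0$. Substituting $y = u$ into the displayed identity then yields
\[
0 = \langle Tu, Tx \rangle_A = \frac{\|Tx\|_A^2}{\|x\|_A^2}\,\langle u, x \rangle_A,
\]
and since the scalar factor $\|Tx\|_A^2/\|x\|_A^2$ is nonzero, we conclude $\langle u, x \rangle_A = 0$, i.e.\ $u \perp_A x$. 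By symmetry of $A$-orthogonality this is the same as $x \perp_A u$, so $u \in x^{\perp_A}$.

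Since $u \in \mathcal{N}_A(T)$ was arbitrary, this shows $\mathcal{N}_A(T) \subset x^{\perp_A}$, completing the second alternative and hence the dichotomy. The only point requiring a little care is the implicit claim that $\|Tx\|_A \neq 0$ forces $\|x\|_A \neq 0$, which I would justify via the $A$-boundedness inequality as above so that Theorem~\ref{level} legitimately applies. I do not anticipate a genuine obstacle here: the result is a clean consequence of Theorem~\ref{level} together with the Cauchy--Schwarz inequality for the semi-inner product, and the main substantive work has already been carried out in proving that theorem.
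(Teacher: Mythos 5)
Your proposal is correct and follows essentially the same route as the paper: specialize Theorem~\ref{level} at $x$, note $\langle Tu,Tx\rangle_A=0$ for $u\in\mathcal{N}_A(T)$ (via Cauchy--Schwarz for the semi-inner product), and divide out the nonzero factor $\|Tx\|_A^2/\|x\|_A^2$. Your explicit justifications of the two points the paper leaves implicit --- that $\|Tx\|_A\neq 0$ forces $\|x\|_A\neq 0$ by $A$-boundedness, and the Cauchy--Schwarz step --- are both accurate and fill in exactly what the paper glosses over.
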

\begin{proof} Let  $T$ preserve $A$-orthogonality at $x.$ Suppose that $\|Tx\|_A\neq0.$
    If $\|x\|_A=0$ then it is easy to observe that $\|Tx\|_A=0$ and so  $\|x\|_A\neq0.$  Let $y\in \mathcal{N}_A(T).$ Then $\langle Ty, Tx\rangle_A=0.$ From Theorem \ref{level}, it follows that $\langle y, x\rangle_A=0.$ Thus,  $y\in x^{\perp_A}$ and consequently, $\mathcal{N}_A(T)\subset x^{\perp_A}.$
\end{proof}

Now, we have an observation that the preservation of $A$-orthogonality at any point by an  $A$-bounded operator $T$ on a Hilbert space implies that either this point belongs to $m_T^A$ or $A$-orthogonal to $m_T^A.$
\begin{prop}\label{prop mT sub}
 Let  $T\in B_{A^{1/2}}(\mathbb{H}).$ If  $T$ preserves $A$-orthogonality at any $x\in S_{\mathbb{H}(A)}\setminus m_T^A$ then  $m_T^A\subset x^{\perp_A}.$ 
\end{prop}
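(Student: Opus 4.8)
The plan is to reduce everything to Theorem~\ref{level} together with a first-order optimality condition satisfied by the points of $m_T^A$. First I would record the following variational fact: if $z \in m_T^A$ and $w \in \mathbb{H}$ satisfies $w \perp_A z$, then $Tz \perp_A Tw$. To see this, note $\|z\|_A = 1$ and, since $\langle w, z\rangle_A = 0$, the curve $z_t = (z + tw)/\|z + tw\|_A$ is well defined for every real $t$ (because $\|z + tw\|_A^2 = 1 + t^2\|w\|_A^2 \geq 1$) and lies in $S_{\mathbb{H}(A)}$. As $z$ minimizes $\|T\cdot\|_A$ over $S_{\mathbb{H}(A)}$, the scalar function $t \mapsto \|Tz_t\|_A^2 = \big(\|Tz\|_A^2 + 2t\,\Re\langle Tw, Tz\rangle_A + t^2\|Tw\|_A^2\big)/\big(1 + t^2\|w\|_A^2\big)$ attains its minimum at $t = 0$; differentiating and equating the derivative to $0$ forces $\Re\langle Tw, Tz\rangle_A = 0$. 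In the complex case, replacing $w$ by $iw$ (which still satisfies $iw \perp_A z$) yields $\Im\langle Tw, Tz\rangle_A = 0$ as well, so that $\langle Tw, Tz\rangle_A = 0$, i.e.\ $Tz \perp_A Tw$.

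With this in hand, let $z \in m_T^A$ be arbitrary (if $m_T^A = \emptyset$ there is nothing to prove). Since $\|x\|_A = 1 \neq 0$ and $T$ preserves $A$-orthogonality at $x$, Theorem~\ref{level} applied with $y = z$ gives $\langle Tz, Tx\rangle_A = \|Tx\|_A^2\,\langle z, x\rangle_A$. I would then compute the left-hand side a second way by splitting $x$ along $z$: writing $x = \beta z + u$ with $\beta = \langle x, z\rangle_A$ and $u = x - \beta z$, one checks $u \perp_A z$, so the variational fact gives $\langle Tu, Tz\rangle_A = 0$, whence $\langle Tz, Tx\rangle_A = \overline{\beta}\,\|Tz\|_A^2 = m_A(T)^2\,\langle z, x\rangle_A$.

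Comparing the two expressions yields $\langle z, x\rangle_A\big(m_A(T)^2 - \|Tx\|_A^2\big) = 0$. Because $x \in S_{\mathbb{H}(A)} \setminus m_T^A$ we have $\|Tx\|_A > m_A(T) \geq 0$, so the second factor is nonzero and therefore $\langle z, x\rangle_A = 0$, i.e.\ $z \in x^{\perp_A}$. As $z \in m_T^A$ was arbitrary, $m_T^A \subset x^{\perp_A}$, as required.

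The main obstacle is the variational fact: one must verify that the perturbed curve remains on the $A$-sphere and that $t=0$ is a genuine minimizer of $\|Tz_t\|_A^2$, and then, in the complex setting, extract both the real and the imaginary parts of $\langle Tw, Tz\rangle_A$ by testing with $w$ and $iw$. Once this is established, everything afterward is a direct computation built on Theorem~\ref{level} and the orthogonal splitting of $x$ along $z$.
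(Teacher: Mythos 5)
Your proof is correct, but it reverses the paper's decomposition and consequently needs an auxiliary lemma that the paper's own argument avoids. The paper fixes $z\in m_T^A$ and writes $z=\alpha x+v$ with $v\in (Ax)^{\perp}$, so that the preservation hypothesis at $x$ gives $\langle Tx,Tv\rangle_A=0$ and hence the Pythagorean identity $\|Tz\|_A^2=|\alpha|^2\|Tx\|_A^2+\|Tv\|_A^2$; combined with $\|Tx\|_A>m_A(T)$ (since $x\notin m_T^A$) and $\|Tv\|_A\geq m_A(T)\|v\|_A$, this forces $\|Tz\|_A>m_A(T)$ whenever $\alpha\neq 0$, contradicting $z\in m_T^A$ directly, with no input beyond Theorem~\ref{level}'s setting. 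You instead split $x=\beta z+u$ with $u\perp_A z$, which requires first proving that points of $m_T^A$ themselves preserve $A$-orthogonality; your variational derivation of this fact is sound --- the curve $z_t=(z+tw)/\|z+tw\|_A$ stays on $S_{\mathbb{H}(A)}$ because $\|z+tw\|_A^2=1+t^2\|w\|_A^2\geq 1$, the function $t\mapsto\|Tz_t\|_A^2$ is differentiable with a global minimum at $t=0$, and testing with $w$ and $iw$ recovers both $\Re\langle Tw,Tz\rangle_A=0$ and $\Im\langle Tw,Tz\rangle_A=0$ --- but note that it duplicates exactly the necessary direction of Theorem~\ref{level MT}(ii), which the paper proves (after this proposition) without any calculus, via the one-line estimate $\|Tx+\lambda Tz\|_A=\|T(x+\lambda z)\|_A\geq m_A(T)\|x+\lambda z\|_A=\|Tx\|_A\|x+\lambda z\|_A\geq\|Tx\|_A$. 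What each route buys: the paper's proof is shorter and self-contained at this stage of the development, while yours makes the underlying mechanism transparent --- once both $x$ and $z$ satisfy the conformal-type identity of Theorem~\ref{level}, comparing the two evaluations of $\langle Tz,Tx\rangle_A$ cleanly isolates the factor $m_A(T)^2-\|Tx\|_A^2$, and your first-order lemma for minimizers on the $A$-sphere has some independent interest. A small bonus of your computation is that it handles seminorm degeneracies uniformly, whereas the paper's normalization $\|Tv\|_A^2=\|v\|_A^2\bigl\|T(v/\|v\|_A)\bigr\|_A^2$ tacitly assumes $\|v\|_A\neq 0$ (a trivially repairable gap, since $\|v\|_A=0$ forces $\|Tv\|_A=0$ and the contradiction persists).
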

\begin{proof}
    Let $z\in m_T^A$ and let  $T$ preserve $A$-orthogonality   at $x\in S_{\mathbb{H}(A)}\setminus m_T^A.$ Then clearly $\|Tx\|_A>m_A(T).$  Now, $\mathbb{H}=\spn~\{x\}\oplus (Ax)^\perp$ and so  $z=\alpha x+v$ for some $\alpha \in \mathbb{K}$ and $v\in (Ax)^\perp.$  If possible suppose that $\alpha\neq 0.$ Since  $T$ preserves $A$-orthogonality at $x,$ it follows that $\langle Tx,Tv\rangle_A=0.$ Then 
    \begin{eqnarray*}
        \|Tz\|_A^2&=&\langle T(\alpha x+v),T(\alpha x+v)\rangle_A\\
        &=& |\alpha|^2 \|Tx\|_A^2+\|Tv\|_A^2\\
         &=& |\alpha|^2 \|Tx\|_A^2+\|v\|_A^2 \left\|T\left(\frac{v}{\|v\|_A}\right)\right\|_A^2\\
        &>& |\alpha|^2 \big(m_A(T)\big)^2+ \|v\|_A^2 \big(m_A(T)\big)^2\\
        &=& \big(|\alpha|^2+\|v\|_A^2\big) \big(m_A(T)\big)^2.
    \end{eqnarray*}
    Since $\langle x,v\rangle_A=0,$  we have $|\alpha|^2+\|v\|_A^2=\|\alpha x+v\|_A=\|z\|_A=1$ and so $\|Tz\|_A> m_A(T).$ This is a contradiction. Thus, $\alpha=0$ and so $z\in x^{\perp_A}.$ Therefore,  $m_T^A\subset x^{\perp_A}.$
\end{proof}

Next, we provide complete characterizations of the $A$-norm attainment set  and  the minimum  $A$-norm attainment set for an $A$-bounded linear operator  on a Hilbert space.
\begin{thm}\label{level MT}
    	Let $T\in B_{A^{1/2}}(\mathbb{H}).$  For any $x\in S_{\mathbb{H}(A)},$ the following results hold.
        \begin{itemize}
            \item[(i)] $x\in M_T^A\iff\langle Ty,Tx\rangle_A= \|T\|_A^2 \langle y,x \rangle_A \text{ for all } y\in \mathbb{H}.$
            \item[(ii)]$x\in m_T^A\iff\langle Ty,Tx\rangle_A= (m_A(T))^2 \langle y,x\rangle_A \text{ for all } y\in \mathbb{H}.$
        \end{itemize}
\end{thm}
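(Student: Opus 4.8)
The plan is to reduce both equivalences to the pointwise characterization of $A$-orthogonality preservation already obtained in Theorem \ref{level}, so that almost all the real work is concentrated in a single extremality argument. The reverse implications are immediate and I would dispose of them first: in (i), assuming $\langle Ty,Tx\rangle_A=\|T\|_A^2\langle y,x\rangle_A$ for all $y$ and substituting $y=x$ gives $\|Tx\|_A^2=\|T\|_A^2\|x\|_A^2=\|T\|_A^2$, whence $\|Tx\|_A=\|T\|_A$ and $x\in M_T^A$; substituting $y=x$ in the identity of (ii) yields $\|Tx\|_A=m_A(T)$ in exactly the same way, so $x\in m_T^A$. Thus the entire content lies in the forward directions.

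For the forward direction of (i), I would show that every $x\in M_T^A$ is a point at which $T$ preserves $A$-orthogonality, and then invoke Theorem \ref{level} (applicable since $\|x\|_A=1\neq0$) to convert that preservation into the stated identity with constant $\|Tx\|_A^2/\|x\|_A^2=\|T\|_A^2$. To establish preservation, fix $y$ with $\langle y,x\rangle_A=0$ and expand, for a scalar $\lambda$,
\[
\|T(x+\lambda y)\|_A^2=\|Tx\|_A^2+2\Re\big(\lambda\langle Ty,Tx\rangle_A\big)+|\lambda|^2\|Ty\|_A^2,\qquad \|x+\lambda y\|_A^2=1+|\lambda|^2\|y\|_A^2,
\]
the second identity using $\langle y,x\rangle_A=0$ and $\|x\|_A=1$. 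The operator-norm inequality $\|T(x+\lambda y)\|_A^2\le\|T\|_A^2\|x+\lambda y\|_A^2$ together with $\|Tx\|_A=\|T\|_A$ then forces $2\Re(\lambda\langle Ty,Tx\rangle_A)\le|\lambda|^2\big(\|T\|_A^2\|y\|_A^2-\|Ty\|_A^2\big)$. Writing $\lambda=t\beta$ with $t>0$ small and $\beta$ a unimodular scalar chosen so that $\beta\langle Ty,Tx\rangle_A=|\langle Ty,Tx\rangle_A|$, the left-hand side is of order $t$ while the right-hand side is of order $t^2$; dividing by $t$ and letting $t\to0^+$ forces $\langle Ty,Tx\rangle_A=0$, which is precisely $A$-orthogonality preservation at $x$.

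The forward direction of (ii) runs along identical lines with the extremal inequality reversed: since $x\in m_T^A$, the definition of $m_A(T)$ gives $\|T(x+\lambda y)\|_A^2\ge m_A(T)^2\|x+\lambda y\|_A^2$, hence $2\Re(\lambda\langle Ty,Tx\rangle_A)\ge|\lambda|^2\big(m_A(T)^2\|y\|_A^2-\|Ty\|_A^2\big)$ for all $\lambda$. Choosing the phase of $\lambda$ so that $\Re(\lambda\langle Ty,Tx\rangle_A)=-t|\langle Ty,Tx\rangle_A|$ and again letting $t\to0^+$ forces $\langle Ty,Tx\rangle_A=0$; Theorem \ref{level} then delivers the identity with constant $m_A(T)^2$.

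The main obstacle, and really the only nonroutine point, is this extremality/perturbation step establishing orthogonality preservation at norm-attaining and minimum-norm-attaining vectors, specifically the correct selection of the complex phase $\beta$ ensuring that the first-order term in $t$ dominates the second-order term as $t\to0^+$ (in the real case $\beta=\pm1$ and the same argument applies verbatim). Once preservation is secured, Theorem \ref{level} and the substitution $y=x$ handle everything else mechanically.
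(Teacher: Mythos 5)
Your proposal is correct and follows essentially the same route as the paper: establish that membership in $M_T^A$ (resp.\ $m_T^A$) forces $T$ to preserve $A$-orthogonality at $x$ via a first-order perturbation of the extremal inequality, then invoke Theorem \ref{level} with $\|x\|_A=1$, the converses being the trivial substitution $y=x$ in both cases. The only cosmetic difference is in (ii), where the paper bypasses the quadratic expansion and phase selection entirely by observing $\|Tx+\lambda Ty\|_A=\|T(x+\lambda y)\|_A\ge m_A(T)\|x+\lambda y\|_A\ge m_A(T)=\|Tx\|_A$ for all $\lambda$, so that $Tx\perp_A Ty$ follows directly from the norm characterization of $A$-orthogonality; your limit argument with $\lambda=t\beta$, $t\to 0^+$ is just the continuous version of the paper's concrete choice $\lambda=\epsilon/\langle Tz,Tx\rangle_A$ in (i), and is equally valid.
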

\begin{proof}
    (i) Let $x\in M_T^A.$ First, we show  that $x \perp_A z\implies Tx \perp_A Tz$ for any $z\in \mathbb{H}.$ Suppose on the contrary that there exists a non-zero $z\in \mathbb{H}$ be such that $\langle z,x\rangle_A=0$ but  $\langle Tz,Tx\rangle_A\neq0.$  Then for any $\lambda \in \mathbb{K},$
	\begin{eqnarray*}
\|Tx\|_A^2(1+|\lambda|^2\|z\|_A^2)&=&\|T\|_A^2\|x+\lambda z\|_A^2\\
		&\geq& \|Tx+\lambda Tz\|_A^2\\
        &=& \|Tx\|_A^2+ 2\Re (\lambda \langle Tz,Tx\rangle_A )+ |\lambda|^2\|Tz\|_A^2.
        \end{eqnarray*}
		This implies that
        \[|\lambda|^2\|z\|_A^2 \|Tx\|_A^2\geq 2\Re (\lambda \langle Tz,Tx\rangle_A  )+ |\lambda|^2\|Tz\|_A^2.\]

	Consider $\lambda = \frac{\epsilon}{\langle Tz,Tx\rangle_A},$ where $0<\epsilon <\frac{2|\langle Tz,Tx\rangle_A|^2}{\|z\|_A^2 \|Tx\|_A^2}.$ Then 
\[\frac{\epsilon^2 \|z\|_A^2 \|Tx\|_A^2}{|\langle Tz,Tx\rangle_A|^2}> 2\epsilon+ |\lambda|^2\|Tz\|_A^2\\
		 \implies 2\epsilon > 2\epsilon+ |\lambda|^2\|Tz\|_A^2.
\]
This is a contradiction. Thus, $x \perp_A z\implies Tx \perp_A Tz$ for any $z\in \mathbb{H}.$  Then it follows from Theorem \ref{level} that 
\[\langle Tz,Tx\rangle_A= \|T\|_A^2 \langle z,x\rangle_A \text{ for all } z\in \mathbb{H}.\]

Conversely,  let $\langle Ty,Tx\rangle_A= \|T\|_A^2 \langle y,x\rangle_A \text{ for all } y\in \mathbb{H}.$ Then $\langle Tx,Tx\rangle_A= \|T\|_A^2 \langle x,x\rangle_A$ and so $\|Tx\|_A= \|T\|_A.$ Thus, $x\in M_T^A.$ This completes the proof of (i).\\

(ii) Let $x\in m_T^A$  and let  $z\in \mathbb{H}$ be such that $x\perp_A z.$ Then for any $\lambda \in \mathbb{K},$
\[\|x+\lambda z\|_A\geq \|x\|_A=1.\]
Now, 
\begin{eqnarray*}
    \|Tx+\lambda Tz\|_A &=&\|T(x+\lambda z)\|_A\\
    &\geq& m_A(T) \|x+\lambda z\|_A\\
    &=& \|Tx\|_A\|x+\lambda z\|_A\\
    &\geq&\|Tx\|_A.
\end{eqnarray*}
So, $Tx\perp_A Tz.$ 
Then it follows from Theorem \ref{level} that 
\[\langle Tz,Tx\rangle_A= (m_A(T))^2 \langle z,x\rangle_A \text{ for all } z\in \mathbb{H}.\]

Conversely,  let $\langle Ty,Tx\rangle_A= (m_A(T))^2 \langle y,x\rangle_A \text{ for all } y\in \mathbb{H}.$ Then $\langle Tx,Tx\rangle_A= (m_A(T))^2 \langle x,x\rangle_A$ and so $\|Tx\|_A= m_A(T).$ Thus, $x\in m_T^A.$ This completes the proof of (ii).
\end{proof}
\begin{rem}
\begin{itemize}
    \item[(i)] From Theorem \ref{level MT}, it is straight forward to see that any $A$-bounded operator on a Hilbert space  preserves $A$-orthogonality in both direction at each element of the $A$-norm attainment set and the minimum  $A$-norm attainment set of that operator, i.e., for any $T\in B_{A^{1/2}}(\mathbb{H}),$ if $x\in  M_T^A(~\text{or}~  m_T^A)$ then $x\perp_Ay\iff Tx\perp_ATy. $ 
    \item[(ii)] Since any $A$-bounded operator on a Hilbert space  preserves $A$-orthogonality  at each element of $A$-norm attainment set, it follows from Proposition \ref{prop mT sub} that for any $A$-bounded operator, the  $A$-norm attainment set and  the minimum $A$-norm attainment set are either equal or $A$-orthogonal. 
\end{itemize}
\end{rem}

The next two corollaries present further significant properties of the $A$-norm attainment set and the minimum $A$-norm attainment set of an $A$-bounded operator on a Hilbert space. We provide the proof of the first corollary only, as the second can be proven in a similar manner.
\begin{cor}
      Let  $T\in B_{A^{1/2}}(\mathbb{H})$ be  such that $\|T\|_A\neq 0.$ Then for any  $x\in M_T^A$, 
    \begin{itemize}
        \item[(i)]  $ T \big(x^{A+}\big)= (Tx)^{A+},$
        \item[(ii)] $ T \big(x^{A-}\big)= (Tx)^{A-},$ 
        \item[(iii)] $T \big(x^{\perp_{\alpha}^A}\big)= (Tx)^{\perp_{\alpha}^A}.$
    \end{itemize}
\end{cor}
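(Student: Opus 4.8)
The plan is to leverage the characterization from Theorem \ref{level MT}(i), which tells us that for $x \in M_T^A$ we have the exact identity $\langle Ty, Tx\rangle_A = \|T\|_A^2 \langle y, x\rangle_A$ for all $y \in \mathbb{H}$. This is the key tool, because it converts the geometric sets $x^{A+}$, $x^{A-}$, and $x^{\perp_\alpha^A}$ — each defined by real-part sign conditions on $\langle y, x\rangle_A$ via Proposition \ref{def alpha} — directly into the corresponding conditions on $\langle Ty, Tx\rangle_A$. Since $\|T\|_A^2 > 0$ is a positive scalar, multiplying by it preserves the sign of the real part, so the defining inequalities transfer verbatim.

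First I would treat (iii), since it is the cleanest and (i), (ii) follow the same template. To show $T(x^{\perp_\alpha^A}) \subseteq (Tx)^{\perp_\alpha^A}$, take $y \in x^{\perp_\alpha^A}$, so by Proposition \ref{def alpha}(iii) we have $\Re(\alpha\langle y,x\rangle_A) = 0$. Applying the identity from Theorem \ref{level MT}(i) gives $\Re(\alpha\langle Ty,Tx\rangle_A) = \|T\|_A^2\,\Re(\alpha\langle y,x\rangle_A) = 0$, so $Ty \in (Tx)^{\perp_\alpha^A}$. For (i) and (ii) I would use parts (i) and (ii) of Proposition \ref{def alpha} in exactly the same way, noting that the inequalities $\Re(\alpha\langle y,x\rangle_A) \geq 0$ (resp. $\leq 0$) are preserved under multiplication by the positive scalar $\|T\|_A^2$. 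This gives all three forward inclusions $T(x^{A+}) \subseteq (Tx)^{A+}$, etc.

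The main obstacle is the reverse inclusion, namely showing that $T(x^{A+}) \supseteq (Tx)^{A+}$ and similarly for the others — this is where the equality (as opposed to mere inclusion) must be earned. The subtlety is that an arbitrary element $w \in (Tx)^{A+}$ lies in the range restricted to $Tx$'s orthogonality cone, and I must produce a preimage inside $x^{A+}$. The natural strategy is to observe that since $x \in M_T^A$ with $\|T\|_A \neq 0$, we have $\|Tx\|_A = \|T\|_A \neq 0$, so $Tx \neq 0$ in the $A$-seminorm; I would then use the decomposition $\mathbb{H} = \spn\{x\} \oplus (Ax)^\perp$ (as in the proof of Theorem \ref{level}) together with the identity to show that $T$ maps this decomposition compatibly onto the analogous decomposition at $Tx$. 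Concretely, given $w \in (Tx)^{A+}$, the condition $\Re(\alpha\langle w, Tx\rangle_A) \geq 0$ for the relevant $\alpha$ only constrains $w$ through its $A$-inner product with $Tx$; I would need to exhibit $y \in x^{A+}$ with $\langle Ty, Tx\rangle_A = \langle w, Tx\rangle_A$, which by the identity reduces to matching $\langle y, x\rangle_A$ to a prescribed value whose real part has the correct sign. Since scalar multiples of $x$ realize every possible value of $\langle y, x\rangle_A$, this matching is achievable, and the sign condition on $\langle w, Tx\rangle_A$ pulls back to the sign condition defining $x^{A+}$.

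I expect the reverse inclusion argument to hinge on a careful bookkeeping of which part of $w$ is ``seen'' by $Tx$ under the $A$-seminorm: elements differing by something in $(A\,Tx)^\perp$ are indistinguishable for membership in $(Tx)^{A+}$, so the correspondence is really between the one-dimensional ``shadows'' $\Re(\alpha\langle \cdot, x\rangle_A)$ and $\Re(\alpha\langle \cdot, Tx\rangle_A)$, which the identity $\langle Ty,Tx\rangle_A = \|T\|_A^2\langle y,x\rangle_A$ identifies up to the positive factor $\|T\|_A^2$. Once this bijective correspondence of shadows is made precise, all three equalities follow simultaneously, and the case analysis for $x^{A+}$ versus $x^{A-}$ is just the two choices of inequality direction.
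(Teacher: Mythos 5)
Your forward inclusions are exactly the paper's argument: Theorem \ref{level MT}(i) gives $\langle Ty,Tx\rangle_A=\|T\|_A^2\,\langle y,x\rangle_A$ for all $y\in\mathbb{H}$, and since $\|T\|_A^2>0$ the sign conditions of Proposition \ref{def alpha} transfer verbatim, yielding $T\big((x)_{\alpha}^{A+}\big)\subset (Tx)_{\alpha}^{A+}$ for every $\alpha\in U$ and likewise for the other two sets (the paper also records the real case by taking $\alpha=1$, which you omit but which is routine). Up to this point you and the paper coincide.

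The genuine gap is in your reverse inclusion. Exhibiting $y\in x^{A+}$ with $\langle Ty,Tx\rangle_A=\langle w,Tx\rangle_A$ does not place $w$ in $T\big(x^{A+}\big)$: for that you would need $Ty=w$ itself, and an arbitrary $w\in (Tx)^{A+}$ need not lie in $\mathcal{R}(T)$ at all. Your own observation that membership in $(Tx)^{A+}$ only sees the shadow $\langle w,Tx\rangle_A$ cuts the other way: matching shadows (which is indeed achievable via scalar multiples of $x$, since $\|x\|_A=1$) shows only that $w$ and some $Ty$ are indistinguishable to the cone at $Tx$, not that they are equal as elements of $\mathbb{H}$, which is what the set equality $T\big(x^{A+}\big)=(Tx)^{A+}$ demands. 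What the two-sided sign equivalence --- the paper's display $\Re(\alpha\langle Ty,Tx\rangle_A)\geq 0 \iff \Re(\alpha\langle y,x\rangle_A)\geq 0$ --- actually yields is the preimage identity $x^{A+}=T^{-1}\big((Tx)^{A+}\big)$, equivalently $T\big(x^{A+}\big)=\mathcal{R}(T)\cap (Tx)^{A+}$; neither your shadow-matching nor, it should be said, the paper's one-line conclusion ``Therefore, $T\big((x)_{\alpha}^{A+}\big)=(Tx)_{\alpha}^{A+}$'' supplies the surjectivity needed to drop the intersection with $\mathcal{R}(T)$. The issue is not cosmetic: take $A=I$ and $T$ the unilateral shift on $\ell^2$, so that $\|T\|_A=1$ and every unit vector belongs to $M_T^A$; with $x=e_1$ we have $\langle e_1,Te_1\rangle=\langle e_1,e_2\rangle=0$, hence $e_1\in (Tx)^{A+}$, yet $e_1\notin\mathcal{R}(T)$, so $e_1\notin T\big(x^{A+}\big)$. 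So your plan for the reverse inclusion cannot be completed as stated; the provable conclusions from the norm-attainment identity are the inclusions $T\big(x^{A+}\big)\subset (Tx)^{A+}$ together with the preimage equalities above, and literal set equality requires an additional hypothesis such as surjectivity of $T$.
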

\begin{proof}
  (i) We first consider $\mathbb{H}$ to be a complex Hilbert space. Let  $\alpha \in U.$ 
  From Theorem \ref{level MT}, it follows that  
  \begin{eqnarray*}
     && \langle Ty,Tx\rangle_A= \|T\|_A^2 \langle y,x\rangle_A \text{ for all } y\in \mathbb{H}\\
     &\implies&  \Re(\alpha\langle Ty,Tx\rangle_A)= \|T\|_A^2 \Re(\alpha\langle y,x\rangle_A) \text{ for all } y\in \mathbb{H}.
  \end{eqnarray*}
  This implies that 
  \[\Re(\alpha\langle Ty,Tx\rangle_A)\geq 0 \iff \Re(\alpha\langle y,x\rangle_A)\geq 0. \]
  Therefore, $T \big((x)_{\alpha}^{A+}\big)= (Tx)_{\alpha}^{A+}.$ This holds for each $\alpha \in U$ and so $T \big(x^{A+}\big)= (Tx)^{A+}.$\\
  For real Hilbert spaces, the result follows by noting that $x^{A+}=(x)_{\alpha}^{A+},$ for $\alpha=1.$
  The proofs of (ii) and (iii) follow from similar arguments.
\end{proof}

\begin{cor}
      Let $T\in B_{A^{1/2}}(\mathbb{H})$ be  such that $m_A(T)\neq 0.$ Then for any  $x\in m_T^A$, 
    \begin{itemize}
        \item[(i)]  $ T \big(x^{A+}\big)= (Tx)^{A+},$
        \item[(ii)] $ T \big(x^{A-}\big)= (Tx)^{A-},$ 
        \item[(iii)] $T \big(x^{\perp_{\alpha}^A}\big)= (Tx)^{\perp_{\alpha}^A}.$
    \end{itemize}
\end{cor}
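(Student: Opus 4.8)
The plan is to follow verbatim the scheme of the preceding corollary, with the $A$-norm attainment set replaced by $m_T^A$ and the scalar $\|T\|_A^2$ replaced by $(m_A(T))^2$. The single structural input I would use is Theorem~\ref{level MT}(ii): since $x\in m_T^A$, it gives $\langle Ty,Tx\rangle_A=(m_A(T))^2\langle y,x\rangle_A$ for every $y\in\mathbb{H}$. First I would treat the complex case. Fixing $\alpha\in U$ and multiplying by $\alpha$ before taking real parts yields $\Re(\alpha\langle Ty,Tx\rangle_A)=(m_A(T))^2\,\Re(\alpha\langle y,x\rangle_A)$ for all $y\in\mathbb{H}$.

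Because $m_A(T)\neq 0$, the factor $(m_A(T))^2$ is strictly positive, so this identity upgrades to the biconditional $\Re(\alpha\langle Ty,Tx\rangle_A)\geq 0 \iff \Re(\alpha\langle y,x\rangle_A)\geq 0$. By Proposition~\ref{def alpha}(i) this reads $Ty\in (Tx)_\alpha^{A+}\iff y\in (x)_\alpha^{A+}$, giving $T\big((x)_\alpha^{A+}\big)=(Tx)_\alpha^{A+}$; intersecting over $\alpha\in U$ and using $x^{A+}=\bigcap_{\alpha\in U}(x)_\alpha^{A+}$ then proves (i). The real case is the special instance $\alpha=1$, since there $x^{A+}=(x)_1^{A+}$. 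Parts (ii) and (iii) are identical, replacing $\geq 0$ by $\leq 0$ and by $=0$ and invoking Proposition~\ref{def alpha}(ii) and (iii) respectively (for (iii) no intersection is needed, as the statement is already fixed at a single $\alpha$).

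I do not expect a genuine obstacle here: the argument is mechanical once Theorem~\ref{level MT}(ii) is available. The only point requiring care is checking that Proposition~\ref{def alpha} actually applies, i.e.\ that both $x$ and $Tx$ have nonzero $A$-seminorm. This is exactly where the membership $x\in m_T^A\subset S_{\mathbb{H}(A)}$ (so that $\|x\|_A=1$) and the standing hypothesis $m_A(T)\neq 0$ (so that $\|Tx\|_A=m_A(T)\neq 0$) are used, and I would record these two facts explicitly before invoking the proposition.
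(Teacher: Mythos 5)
Your proposal is exactly the paper's intended argument: the paper proves only the first corollary (via Theorem~\ref{level MT}(i), multiplication by $\alpha$, taking real parts, the strict positivity of the scalar to upgrade to a biconditional, Proposition~\ref{def alpha}, intersection over $\alpha\in U$, and $\alpha=1$ for the real case) and states that the second follows in a similar manner, which is precisely the substitution of $m_T^A$ for $M_T^A$ and $(m_A(T))^2$ for $\|T\|_A^2$ that you carry out. Your explicit verification that $\|x\|_A=1$ and $\|Tx\|_A=m_A(T)\neq 0$ before invoking Proposition~\ref{def alpha} is a careful touch consistent with the paper's hypotheses.
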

Next, we establish a distinct characterization of the local preservation of $A$-orthogonality  at a point by $A$-bounded operators acting on complex Hilbert spaces.
\begin{thm}
    Let $\mathbb{H}$ be a complex Hilbert space and let  $x\in \mathbb{H}$ be such that $\|x\|_A\neq0.$ Suppose that  $T\in B_{A^{1/2}}(\mathbb{H})$ and $\alpha\in U.$ Consider  the following statements.
    \begin{itemize}
        \item[(i)] $T \big(x^{\perp_{\alpha}^A}\big)\subset (Tx)^{\perp_{\alpha}^A}.$
\item[(ii)] Either $ T \big((x)_{\alpha}^{A+}\big)\subset (Tx)_{\alpha}^{A+}$ and $ T \big((x)_{\alpha}^{A-}\big)\subset (Tx)_{\alpha}^{A-},$ or $ T \big((x)_{\alpha}^{A+}\big)\subset (Tx)_{\alpha}^{A-}$ and $ T \big((x)_{\alpha}^{A-}\big)\subset (Tx)_{\alpha}^{A+}.$ 

        \item[(iii)] $T \big(x^{\perp_A}\big)\subset (Tx)^{\perp_A}.$

        \item[(iv)] Either $ T \big(x^{A+}\big)\subset (Tx)^{A+}$ and $ T \big(x^{A-}\big)\subset (Tx)^{A-},$ or $ T \big(x^{A+}\big)\subset (Tx)^{A-}$ and $ T \big(x^{A-}\big)\subset (Tx)^{A+}.$ 
    \end{itemize}
    Then (i) is equivalent to (ii),   and (iii) is equivalent to (iv).
\end{thm}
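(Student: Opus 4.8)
The plan is to treat the two equivalences separately, reducing each to a functional identity already on hand (Theorem \ref{levelal} for the $\alpha$-level statements, Theorem \ref{level} for the full statements) together with the elementary fact that the symmetric orthogonality sets are intersections of their one-sided counterparts. Throughout I would write $c=\|Tx\|_A^2/\|x\|_A^2\geq 0$, and I would isolate the degenerate case $\|Tx\|_A=0$ at the outset, since Proposition \ref{def alpha} presumes a nonzero $A$-norm and cannot be applied to $Tx$ there; in that case the defining inequalities give $(Tx)_{\alpha}^{A+}=(Tx)_{\alpha}^{A-}=(Tx)^{A+}=(Tx)^{A-}=\mathbb{H}$ outright, so every inclusion in (ii) and (iv) holds vacuously and there is nothing to prove.

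For (i)$\Leftrightarrow$(ii) with $\|Tx\|_A\neq 0$, the backward implication is the routine half: straight from the definitions one has $x^{\perp_{\alpha}^A}=(x)_{\alpha}^{A+}\cap (x)_{\alpha}^{A-}$ and $(Tx)^{\perp_{\alpha}^A}=(Tx)_{\alpha}^{A+}\cap (Tx)_{\alpha}^{A-}$, so if $y\in x^{\perp_{\alpha}^A}$ lies in both one-sided cones, then either alternative in (ii) forces $Ty$ into $(Tx)_{\alpha}^{A+}\cap (Tx)_{\alpha}^{A-}=(Tx)^{\perp_{\alpha}^A}$, giving (i). For the forward implication I would invoke Theorem \ref{levelal} to rewrite (i) as $\Re(\alpha\langle Ty,Tx\rangle_A)=c\,\Re(\alpha\langle y,x\rangle_A)$ for all $y$; since $c>0$, the sign of $\Re(\alpha\langle Ty,Tx\rangle_A)$ matches that of $\Re(\alpha\langle y,x\rangle_A)$, and Proposition \ref{def alpha}(i)--(ii) then yields $T((x)_{\alpha}^{A+})\subset (Tx)_{\alpha}^{A+}$ and $T((x)_{\alpha}^{A-})\subset (Tx)_{\alpha}^{A-}$, which is the first alternative of (ii).

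The proof of (iii)$\Leftrightarrow$(iv) follows the same template with the $\alpha$-dependence removed. Again (iv)$\Rightarrow$(iii) is immediate once one records $x^{\perp_A}=x^{A+}\cap x^{A-}$ and $(Tx)^{\perp_A}=(Tx)^{A+}\cap (Tx)^{A-}$ (either by intersecting the $\alpha$-level identities over $\alpha\in U$, or directly from Proposition \ref{def alpha}(iv)--(v)). The substantive direction is (iii)$\Rightarrow$(iv): here I would observe that (iii) says precisely that $T$ preserves $A$-orthogonality at $x$, so Theorem \ref{level} delivers $\langle Ty,Tx\rangle_A=c\langle y,x\rangle_A$ for all $y$. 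With $c>0$, comparing real and imaginary parts gives $\Re(\langle Ty,Tx\rangle_A)=0\iff\Re(\langle y,x\rangle_A)=0$ and $\Im(\langle Ty,Tx\rangle_A)\leq 0\iff \Im(\langle y,x\rangle_A)\leq 0$; by Proposition \ref{def alpha}(iv)--(v) this is exactly $T(x^{A+})\subset (Tx)^{A+}$ and $T(x^{A-})\subset (Tx)^{A-}$, the first alternative of (iv).

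The point I expect to require the most care is precisely the bookkeeping around the scalar $c$ and the degenerate case. Because $c=\|Tx\|_A^2/\|x\|_A^2$ is always nonnegative, the argument never produces an orientation reversal: in every nondegenerate instance we land in the \emph{first} alternative of (ii) and of (iv), and when $\|Tx\|_A=0$ both alternatives hold trivially. The orientation-reversing second alternative is therefore logically available but never forced here; it is accommodated by the ``either $\ldots$ or'' phrasing, which is what makes the backward implications go through uniformly. The only genuine obstacle is remembering that Proposition \ref{def alpha} is unavailable for $Tx$ when $\|Tx\|_A=0$, so that the set descriptions of $(Tx)^{A\pm}$ and $(Tx)_{\alpha}^{A\pm}$ must be read directly off their defining inequalities in that case rather than through the inner-product formulas.
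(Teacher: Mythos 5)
Your proof is correct, and in the forward directions it is cleaner and slightly stronger than the paper's own argument. The backward implications are handled identically in both: $x^{\perp_{\alpha}^A}=(x)_{\alpha}^{A+}\cap (x)_{\alpha}^{A-}$ and $(Tx)^{\perp_{\alpha}^A}=(Tx)_{\alpha}^{A+}\cap (Tx)_{\alpha}^{A-}$ (intersected over $\alpha\in U$ for the undirected case), so either alternative pushes $Ty$ into the intersection. For (i)$\Rightarrow$(ii), however, the paper argues differently: assuming $\|Tx\|_A\neq 0$, it supposes for contradiction that $(x)_{\alpha}^{A+}\setminus x^{\perp_{\alpha}^A}$ contains $u,v$ mapped strictly to opposite sides, forms a convex combination $(1-t)u+tv$ with $\Re\big(\alpha\langle T((1-t)u+tv),Tx\rangle_A\big)=0$, and transfers this back through Theorem \ref{levelal} to contradict the strict positivity of $\Re(\alpha\langle u,x\rangle_A)$ and $\Re(\alpha\langle v,x\rangle_A)$; this establishes only the dichotomy ``either $T((x)_{\alpha}^{A+})\subset (Tx)_{\alpha}^{A+}$ or $T((x)_{\alpha}^{A+})\subset (Tx)_{\alpha}^{A-}$.'' You instead read off from the same Theorem \ref{levelal} the identity $\Re(\alpha\langle Ty,Tx\rangle_A)=c\,\Re(\alpha\langle y,x\rangle_A)$ with $c=\|Tx\|_A^2/\|x\|_A^2>0$ and conclude via Proposition \ref{def alpha} that signs are preserved, so the first alternative always holds and the orientation-reversing branch is realized only vacuously when $\|Tx\|_A=0$. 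This bypasses the contradiction machinery entirely (which is indeed redundant once Theorem \ref{levelal} has been invoked), and your explicit treatment of the degenerate case --- noting that $(Tx)_{\alpha}^{A\pm}=(Tx)^{A\pm}=\mathbb{H}$ directly from the defining inequalities, since Proposition \ref{def alpha} is inapplicable to $Tx$ there --- fills in what the paper dismisses as ``follows trivially.'' For (iii)$\Leftrightarrow$(iv) the paper only says the argument is ``similar''; your route through Theorem \ref{level} (using that (iii) is exactly preservation of $A$-orthogonality at $x$, since $x^{\perp_A}=\{y:\langle y,x\rangle_A=0\}$) together with Proposition \ref{def alpha}(iv)--(v) is a legitimate and arguably more direct instantiation. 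Net effect: same key lemmas, but your direct sign-comparison yields a mild strengthening of the theorem --- the ``either--or'' in (ii) and (iv) is never forced into its second alternative except degenerately --- at no extra cost.
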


\begin{proof}
We only prove the equivalence of (i) and (ii). The equivalence between (iii) and (iv) can be done in a similar way.

       (i)$\implies$(ii):  Let $T \big(x^{\perp_{\alpha}^A}\big)\subset (Tx)^{\perp_{\alpha}^A}.$ If $\|Tx\|_A=0$ then (ii) follows trivially. Let $\|Tx\|_A\neq0.$ If possible suppose that  there exist $u,v \in (x)_{\alpha}^{A+}\setminus x^{\perp_{\alpha}^A}$ such that  
          \[Tu\in (Tx)_{\alpha}^{A+}\setminus (Tx)^{\perp_{\alpha}^A} \text{ and } Tv\in (Tx)_{\alpha}^{A-}\setminus (Tx)^{\perp_{\alpha}^A}.\] 
      Then from Proposition \ref{def alpha}, it follows that 
      \[\Re(\alpha\langle Tu,Tx \rangle_A)>0 \text{ and } \Re(\alpha\langle Tv,Tx \rangle_A)<0.\]
      Then there exists  $t>0$ such that 
      \begin{eqnarray*}
          &&(1-t)\Re(\alpha\langle Tu,Tx \rangle_A)+ t \Re(\alpha\langle Tv,Tx \rangle_A)=0\\
          &\implies& \Re(\alpha\langle (1-t)Tu +tTv,Tx \rangle_A)=0\\
          &\implies& \Re(\alpha\langle T((1-t)u +tv),Tx \rangle_A)=0.
      \end{eqnarray*}
     From  Theorem \ref{levelal}, it follows that 
     \[\frac {\|Tx\|_A^2}{\|x\|_A^2}\Re(\alpha\langle (1-t)u +tv),x\rangle_A)=\Re(\alpha\langle T((1-t)u +tv),Tx \rangle_A)=0\]
     This implies that 
     \begin{eqnarray*}
        && \Re(\alpha\langle (1-t)u +tv, x\rangle_A)=0\\
         &\implies& (1-t)\Re(\alpha\langle u,x\rangle_A) +t\Re(\alpha\langle v,x\rangle_A)=0\\
         &\implies& \Re(\alpha\langle u,x\rangle_A) \Re(\alpha\langle v,x\rangle_A)\leq 0.
     \end{eqnarray*}
   This contradicts the fact that $u,v \in (x)_{\alpha}^{A+}\setminus x^{\perp_{\alpha}^A}.$ Thus, \[\text{ either }   T \big((x)_{\alpha}^{A+}\big)\subset (Tx)_{\alpha}^{A+}\text{ or }T \big((x)_{\alpha}^{A+}\big)\subset (Tx)_{\alpha}^{A-}.\] 
Now, it is easy to observe that 
\begin{eqnarray*}
   && T \big((x)_{\alpha}^{A+}\big)\subset (Tx)_{\alpha}^{A+}\iff T \big((x)_{\alpha}^{A-}\big)\subset (Tx)_{\alpha}^{A-}\\
   &\text{and}& T \big((x)_{\alpha}^{A+}\big)\subset (Tx)_{\alpha}^{A-}\iff T \big((x)_{\alpha}^{A-}\big)\subset (Tx)_{\alpha}^{A+}.
\end{eqnarray*}
 Therefore,  either $ T \big((x)_{\alpha}^{A+}\big)\subset (Tx)_{\alpha}^{A+}$ and $ T \big((x)_{\alpha}^{A-}\big)\subset (Tx)_{\alpha}^{A-},$ or $ T \big((x)_{\alpha}^{A+}\big)\subset (Tx)_{\alpha}^{A-}$ and $ T \big((x)_{\alpha}^{A-}\big)\subset (Tx)_{\alpha}^{A+}.$ \\
 
   (ii)$\implies$(i): Let $ T \big((x)_{\alpha}^{A+}\big)\subset (Tx)_{\alpha}^{A+}$ and  $ T \big((x)_{\alpha}^{A-}\big)\subset (Tx)_{\alpha}^{A-}.$ Then 
    \[  T \big((x)_{\alpha}^{A+}\big)\cap  T\big((x)_{\alpha}^{A-}\big)\subset (Tx)_{\alpha}^{A+} \cap (Tx)_{\alpha}^{A-}
       \implies T \big(x^{\perp_{\alpha}^A}\big)\subset (Tx)^{\perp_{\alpha}^A}.\]
Similarly, if $ T \big((x)_{\alpha}^{A+}\big)\subset (Tx)_{\alpha}^{A-}$ and  $ T \big((x)_{\alpha}^{A-}\big)\subset (Tx)_{\alpha}^{A+},$ then also $T \big(x^{\perp_{\alpha}^A}\big)\subset (Tx)^{\perp_{\alpha}^A}.$

\end{proof}
\begin{rem}
By considering $\alpha=1$  and noting  that $x^{A+}=(x)_{\alpha}^{A+}$ and  $x^{A-}=(x)_{\alpha}^{A-},$  it is easy to see that the  above result holds for real Hilbert space also. 
\end{rem}

We now shift our focus to explore the relationship between $A$-orthogonality preservation by $A$-bounded operators and  $A$-isometries on a Hilbert space. To initiate this investigation, we present the following result, which provides a characterization of $A$-eigenvectors of the operator $T^\sharp T$ for any $T \in B_{A}(\mathbb{H})$. Recall from \cite{SBP24} that a non-zero element $x \in \overline{\mathcal{R}(A)}$ is said to be an $A$-eigenvector of $T$ if there exists $\lambda \in \mathbb{K}$ such that $ATx = \lambda Ax$, where $\lambda$ is referred to as the corresponding $A$-eigenvalue of $T.$ The set of all $A$-eigenvectors associated with a particular $A$-eigenvalue of $T$ along with zero is called the $A$-eigenspace.

	\begin{prop}\label{evt}
		 Let  $T\in B_{A}(\mathbb{H}).$ Suppose that $x\in\overline{\mathcal{R}(A)}$ is non-zero.  Then $T$ preserves  $A$-orthogonality at $x$ if and only if $x$ is an $A$-eigenvector of $T^\sharp T.$ 
	\end{prop}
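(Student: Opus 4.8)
The plan is to reduce everything to the characterization of local $A$-orthogonality preservation already obtained in Theorem \ref{level}, and then to translate its conclusion into the language of $A$-adjoints via the identity $\langle Ty, Tx\rangle_A = \langle y, T^\sharp Tx\rangle_A$, which holds for all $y$ because $T\in B_A(\mathbb{H})$ admits the $A$-adjoint $T^\sharp$.

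First I would verify that the hypothesis forces $\|x\|_A\neq 0$, so that Theorem \ref{level} is applicable. This is where the assumption $x\in\overline{\mathcal{R}(A)}\setminus\{0\}$ is used: since $A$ is positive (hence self-adjoint), $\mathcal{N}(A)=\mathcal{R}(A)^\perp=\overline{\mathcal{R}(A)}^\perp$, so $\overline{\mathcal{R}(A)}\cap\mathcal{N}(A)=\{0\}$. Because $\|x\|_A=0$ is equivalent to $x\in\mathcal{N}(A)$, a nonzero element of $\overline{\mathcal{R}(A)}$ necessarily satisfies $\|x\|_A\neq 0$. For the forward direction, assume $T$ preserves $A$-orthogonality at $x$. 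Theorem \ref{level} yields $\langle Ty,Tx\rangle_A=c\,\langle y,x\rangle_A$ for all $y$, with $c=\|Tx\|_A^2/\|x\|_A^2$. Rewriting the left side through the $A$-adjoint identity and using that $c$ is real gives $\langle y,\,T^\sharp Tx-cx\rangle_A=0$ for all $y$. Since $\langle y,w\rangle_A=\langle Ay,w\rangle$, this says the vector $w:=T^\sharp Tx-cx$ is orthogonal to $\mathcal{R}(A)$, i.e.\ $w\in\mathcal{R}(A)^\perp=\mathcal{N}(A)$, equivalently $Aw=0$. Hence $A\,T^\sharp Tx=c\,Ax$, which is exactly the statement that $x$ is an $A$-eigenvector of $T^\sharp T$ with $A$-eigenvalue $c$.

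For the converse I would simply reverse this computation. Assuming $A\,T^\sharp Tx=\lambda Ax$ for some $\lambda$, the same $A$-adjoint identity gives $\langle Ty,Tx\rangle_A=\langle y,T^\sharp Tx\rangle_A=\overline{\lambda}\,\langle y,x\rangle_A$ for all $y$; setting $y=x$ produces $\|Tx\|_A^2=\overline{\lambda}\,\|x\|_A^2$, which forces $\lambda$ to be real and equal to $c$. The resulting identity $\langle Ty,Tx\rangle_A=c\,\langle y,x\rangle_A$ then delivers preservation via the trivial direction of Theorem \ref{level}: whenever $\langle y,x\rangle_A=0$ we obtain $\langle Ty,Tx\rangle_A=0$, that is, $x\perp_A y\implies Tx\perp_A Ty$.

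The argument is mostly mechanical, so there is no deep obstacle; the point that requires care is the bookkeeping around the degenerate semi-norm and the conjugate-linearity conventions. Specifically, I would make sure that ``$\langle y,w\rangle_A=0$ for all $y$'' is genuinely equivalent to ``$Aw=0$'' through $\mathcal{R}(A)^\perp=\mathcal{N}(A)$, and that the candidate $A$-eigenvalue is automatically real and nonnegative rather than an arbitrary scalar. The degenerate case $\|Tx\|_A=0$ needs no separate treatment, since it corresponds to $c=0$, i.e.\ the $A$-eigenvalue $0$, and there $T$ preserves $A$-orthogonality at $x$ trivially by the Cauchy–Schwarz inequality for $\langle\cdot,\cdot\rangle_A$.
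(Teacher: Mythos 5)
Your proof is correct and takes essentially the same route as the paper's: both reduce the problem to the identity $\langle Ty,Tx\rangle_A=\frac{\|Tx\|_A^2}{\|x\|_A^2}\langle y,x\rangle_A$ for all $y$ and then convert it into $AT^\sharp Tx=\frac{\|Tx\|_A^2}{\|x\|_A^2}Ax$ via the $A$-adjoint identity and the fact that $\langle y,w\rangle_A=0$ for all $y$ forces $Aw=0$; the only cosmetic difference is that you invoke Theorem \ref{level} directly, whereas the paper re-runs the decomposition $\mathbb{H}=\spn\{x\}\oplus(Ax)^{\perp}$ inline. Your explicit verifications that $x\in\overline{\mathcal{R}(A)}\setminus\{0\}$ forces $\|x\|_A\neq 0$ (so Theorem \ref{level} applies) and that the $A$-eigenvalue is automatically real are details the paper leaves implicit, and they are handled correctly.
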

	\begin{proof}
		First, we prove the sufficient part. Let $x\in\overline{\mathcal{R}(A)}$ be an $A$-eigenvector of $T^\sharp T.$ Then $AT^\sharp Tx=\lambda A x$ for some $\lambda \in\mathbb{K}.$ Now, for any $y\in\mathbb{H},$ $\langle Tx,Ty\rangle_A=\langle T^\sharp Tx,y\rangle_A=\langle AT^\sharp Tx,y\rangle=\langle \lambda A x,y\rangle=\lambda \langle x,y\rangle_A.$ Thus,  for any $y\in\mathbb{H},$ $\langle x,y\rangle_A=0\implies\langle Tx,Ty\rangle_A=0.$ Therefore, $T$ preserves $A$-orthogonality at $x.$
		
		Next, we prove the necessary part. Suppose that $T$ preserves $A$-orthogonality at $x.$  Without loss of generality suppose that $\|x\|_A=1.$ Now,  $\mathbb H=\spn~ \{x\}\oplus (Ax)^{\perp}$.  Let $y\in\mathbb{H}.$ Then $y = \alpha x + v$  for some scalar $\alpha$ and some $v\in (Ax)^{\perp}.$  Since  $T$ preserves $A$-orthogonality at $x$ and $\langle x, v\rangle_A=0,$ it follows that $\langle Tx, Tv\rangle_A =0.$
		Now,
		\begin{eqnarray*}
			\langle Ty, Tx\rangle_A& =&\langle \alpha Tx + Tv, Tx\rangle_A\\
			&=&\alpha \|Tx\|_A^2\\
			&=&\|Tx\|_A^2\langle \alpha x+v, x\rangle_A\\
			&=& \|Tx\|_A^2 \langle y,x\rangle_A.
		\end{eqnarray*}
	Thus, for all $y\in \mathbb{H},$ we have
	\begin{eqnarray*}
		&&\langle Tx, Ty\rangle_A=\langle\|Tx\|_A^2  x,y\rangle_A\\
		\implies && \langle T^\sharp Tx- \|Tx\|_A^2x, y\rangle_A =0\\
		\implies && \langle AT^\sharp Tx- \|Tx\|_A^2Ax, y\rangle =0.
	\end{eqnarray*}
	So
	 $AT^\sharp Tx- \|Tx\|_A^2Ax=0,$ i.e., $AT^\sharp Tx=\|Tx\|_A^2Ax.$ Therefore, $x$ is an $A$-eigenvector of $T^\sharp T$ corresponding to the $A$-eigenvalue $\|Tx\|_A^2.$ This completes the proof of the proposition.
	\end{proof}

Next, we provide a complete characterization of $A$-isometries within the class of $A$-bounded operators that admit an $A$-adjoint.
\begin{thm}\label{iso}
	Let $T\in B_{A}(\mathbb{H}).$ Then $T$ preserves  $A$-orthogonality at each $x\in\overline{\mathcal{R}(A)}$ if and only if $T$ is a scalar multiple of an $A$-isometry. 
\end{thm}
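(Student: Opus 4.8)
The plan is to convert the orthogonality-preservation hypothesis into an eigenvalue statement through Proposition \ref{evt}. That proposition already gives, for every nonzero $x\in\overline{\mathcal{R}(A)}$, the equivalence between preservation of $A$-orthogonality at $x$ and the relation $AT^\sharp Tx=\|Tx\|_A^2\,Ax$; in other words, the hypothesis of the theorem is precisely the assertion that \emph{every} nonzero $x\in\overline{\mathcal{R}(A)}$ is an $A$-eigenvector of $T^\sharp T$, with corresponding $A$-eigenvalue $\|Tx\|_A^2$. The converse direction of the theorem is the routine one: if $T=\lambda V$ with $V^\sharp V=P_{\overline{\mathcal{R}(A)}}$, then $T^\sharp=\bar\lambda V^\sharp$ and hence $T^\sharp T=|\lambda|^2 P_{\overline{\mathcal{R}(A)}}$, so $AT^\sharp Tx=|\lambda|^2 Ax$ for each $x\in\overline{\mathcal{R}(A)}$, and Proposition \ref{evt} immediately returns preservation at every such $x$. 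So the real content lies in the forward direction.

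For the forward direction the first key step is to show that the $A$-eigenvalue $\|Tx\|_A^2$ does not depend on $x$. I would fix a unit $A$-norm vector $x_0\in\overline{\mathcal{R}(A)}$, set $c=\|Tx_0\|_A^2$, and take an arbitrary unit $A$-norm $y\in\overline{\mathcal{R}(A)}$. If $y$ and $x_0$ are linearly dependent the equality of eigenvalues is immediate from homogeneity. If they are independent, I would apply $T^\sharp T$ to $x_0+y$ and compare the two expressions $AT^\sharp T(x_0+y)=\|T(x_0+y)\|_A^2\,A(x_0+y)$ and $AT^\sharp T(x_0+y)=c\,Ax_0+\|Ty\|_A^2\,Ay$, obtaining a linear relation between $Ax_0$ and $Ay$. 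Here the crucial structural fact is that $A$ is injective on $\overline{\mathcal{R}(A)}$ (since $\overline{\mathcal{R}(A)}=\mathcal{N}(A)^\perp$ and $A=A^*$), so $Ax_0$ and $Ay$ remain linearly independent; this forces $\|Ty\|_A^2=c=\|Tx_0\|_A^2$. Consequently $AT^\sharp Tx=c\,Ax$ for all $x\in\overline{\mathcal{R}(A)}$, and since $\|w\|_A=0$ gives $AT^\sharp Tw=0=c\,Aw$ for $w\in\mathcal{N}(A)$, the identity $AT^\sharp T=cA$ extends to all of $\mathbb{H}$ using the decomposition $\mathbb{H}=\overline{\mathcal{R}(A)}\oplus\mathcal{N}(A)$.

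The second step is purely operator-algebraic: from $AT^\sharp T=cA$ I would multiply on the left by $A^\dagger$ and use $A^\dagger A=P_{\overline{\mathcal{R}(A)}}$ together with $\mathcal{R}(T^\sharp T)\subset\overline{\mathcal{R}(A)}$ (which makes $P_{\overline{\mathcal{R}(A)}}$ act as the identity on $\mathcal{R}(T^\sharp T)$) to deduce $T^\sharp T=c\,P_{\overline{\mathcal{R}(A)}}$. If $c=0$ then $\|T\|_A=0$ and $T$ is the zero operator, which is $0$ times any $A$-isometry; if $c>0$ I would set $V=c^{-1/2}T$, so that $V^\sharp V=c^{-1}T^\sharp T=P_{\overline{\mathcal{R}(A)}}$ exhibits $V$ as an $A$-isometry and $T=c^{1/2}V$ as the desired scalar multiple.

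I expect the main obstacle to be the constancy-of-eigenvalue step, and specifically making the linear-independence argument rigorous: one must be sure that distinct directions in $\overline{\mathcal{R}(A)}$ are not collapsed by $A$, which is exactly why restricting the eigenvector analysis to $\overline{\mathcal{R}(A)}$ (rather than all of $\mathbb{H}$) is essential. The passage $AT^\sharp T=cA\Rightarrow T^\sharp T=c\,P_{\overline{\mathcal{R}(A)}}$ also requires care, since $T^\sharp T$ is in general not recoverable from $AT^\sharp T$; it is the containment $\mathcal{R}(T^\sharp)\subset\overline{\mathcal{R}(A)}$ recorded in the preliminaries that makes the cancellation via $A^\dagger$ legitimate.
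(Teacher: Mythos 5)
Your proposal follows essentially the same route as the paper's proof: Proposition \ref{evt} converts the hypothesis into the eigenvector identity $AT^\sharp Tx=\|Tx\|_A^2\,Ax$, the constancy of the eigenvalue is obtained by applying this identity to $x_0+y$ for linearly independent vectors and invoking linear independence of $Ax_0$ and $Ay$ (which you justify via injectivity of $A$ on $\overline{\mathcal{R}(A)}=\mathcal{N}(A)^\perp$, a fact the paper uses without comment), and the resulting identity $AT^\sharp T=cA$ is cancelled through $A^\dagger$ to yield $T^\sharp T=c\,P_{\overline{\mathcal{R}(A)}}$, exactly as in the paper. If anything your writeup is slightly more careful than the paper's (explicit extension over $\mathcal{N}(A)$ and explicit attention to $c=0$), with the one caveat that $\|T\|_A=0$ forces only $A^{1/2}T=0$ rather than $T=0$ --- a degenerate case that the paper's own proof also leaves unaddressed.
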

\begin{proof}
	Sufficient part of the theorem is obvious. We only prove the necessary part. Let $T$ preserve  $A$-orthogonality at each $x\in\overline{\mathcal{R}(A)}.$ From Proposition \ref{evt}, it follows that every  $x\in\overline{\mathcal{R}(A)}$ is an $A$-eigenvector of $T^\sharp T$ with  $A$-eigenvalue $\|Tx\|_A^2.$ So 
	\[AT^\sharp Tx=\|Tx\|_A^2Ax, \text{ for all } x\in \overline{\mathcal{R}(A)}.\]
	We claim that $ \|Tx\|_A=\|Ty\|_A$ for all $x, y \in \overline{\mathcal{R}(A)}.$ If $x,y$ are linearly dependent then this is obvious. Let $x,y$ be linearly independent. Then $Ax, Ay$ are also linearly independent. Now, $x+y$ is  also an $A$-eigenvector of $T^\sharp T$ with  $A$-eigenvalue $\|Tx+Ty\|_A^2.$ So
	\begin{eqnarray*}
		&&AT^\sharp T(x+y)=\|Tx+Ty\|_A^2A(x+y)\\
		\implies&& AT^\sharp Tx+AT^\sharp Ty=\|Tx+Ty\|_A^2Ax+\|Tx+Ty\|_A^2Ay\\
		\implies&& \|Tx\|_A^2Ax+\|Ty\|_A^2Ay=\|Tx+Ty\|_A^2Ax+\|Tx+Ty\|_A^2Ay\\
			\implies&& \left(\|Tx\|_A^2-\|Tx+Ty\|_A^2\right)Ax+\left(\|Ty\|_A^2-\|Tx+Ty\|_A^2\right)Ay=0.
	\end{eqnarray*}
	Since $Ax, Ay$ are linearly independent, it follows that $\|Tx\|_A=\|Tx+Ty\|_A=\|Ty\|_A=k (\text{say}).$ Thus,  our claim is established. Therefore, $AT^\sharp Tx=k^2 Ax$ for all $x\in \overline{\mathcal{R}(A)}$ and so $AT^\sharp Tx=k^2 Ax$ for all $x\in \mathbb{H}.$ So
	\begin{eqnarray*}
	AT^\sharp T=k^2 A
		&\implies&AA^\dagger T^*A T=k^2 A\\
		&\implies& A^\dagger AA^\dagger T^*A T=k^2 A^\dagger A\\
			&\implies& A^\dagger T^*A T=k^2 P_{\overline{\mathcal{R}(A)}}\\
				&\implies& T^\sharp T=k^2 P_{\overline{\mathcal{R}(A)}}.
	\end{eqnarray*}
	Thus, $T$ is an $A$-isometry multiplied by $k.$
\end{proof}

Finally, we obtain a complete description of   $\mathcal K_A$-sets in Hilbert spaces. For this, we require the following observaion on the size of these sets in  Hilbert spaces. 
 \begin{prop}\label{dokn}
	  If $D\subset \overline{\mathcal{R}(A)}$ is a $\mathcal K_A$-set then $\spn~ D=\overline{\mathcal{R}(A)}.$ 
\end{prop}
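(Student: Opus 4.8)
The plan is to argue by contraposition: assuming $\spn D\neq\overline{\mathcal R(A)}$, I would construct an operator $T\in B_A(\mathbb H)$ that preserves $A$-orthogonality at every point of $D$ but is \emph{not} a scalar multiple of an $A$-isometry. Since $D$ consists of points of $S_{\mathbb H(A)}$, exhibiting such a $T$ shows at once that $D$ fails to be a $\mathcal K_A$-set.

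First I would reduce the problem to building a single positive operator. By Proposition \ref{evt}, an operator $T\in B_A(\mathbb H)$ preserves $A$-orthogonality at a nonzero $x\in\overline{\mathcal R(A)}$ exactly when $x$ is an $A$-eigenvector of $S:=T^\sharp T$, while by Theorem \ref{iso} the operator $T$ is a scalar multiple of an $A$-isometry exactly when $S=k^2P_{\overline{\mathcal R(A)}}$. Thus it is enough to exhibit an $A$-self-adjoint, $A$-positive $S\in B_A(\mathbb H)$ with $\mathcal R(S)\subseteq\overline{\mathcal R(A)}$ such that every $x\in D$ is an $A$-eigenvector of $S$ but $S$ is not a scalar multiple of $P_{\overline{\mathcal R(A)}}$; taking $T$ to be the $A$-positive square root of $S$ then finishes the argument, once one checks $T\in B_A(\mathbb H)$.

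The key step is to locate a single privileged direction. I would seek a nonzero $w\in\overline{\mathcal R(A)}$ with $w\perp_A\spn D$, and then put
\[
S=P_{\overline{\mathcal R(A)}}+\langle\,\cdot\,,w\rangle_A\,w .
\]
One checks directly that $S$ is $A$-self-adjoint and $A$-positive, that $\mathcal R(S)\subseteq\overline{\mathcal R(A)}$, and that $S\in B_A(\mathbb H)$, since the rank-one term is the bounded map $u\mapsto\langle u,Aw\rangle w$, whose Hilbert-adjoint composed with $A$ has range in $\spn\{Aw\}\subseteq\mathcal R(A)$. For $x\in D$ we have $P_{\overline{\mathcal R(A)}}x=x$ and $\langle x,w\rangle_A=0$, so $Sx=x$; hence every point of $D$ is an $A$-eigenvector of $S$ with $A$-eigenvalue $1$. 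On the other hand $Sw=(1+\|w\|_A^2)w$ with $\|w\|_A\neq0$ (as $w\in\overline{\mathcal R(A)}\setminus\{0\}$), so $S$ has two distinct $A$-eigenvalues and therefore cannot equal $k^2P_{\overline{\mathcal R(A)}}$. Writing $S$ concretely as $P_{\overline{\mathcal R(A)}}+c\,\langle\,\cdot\,,w\rangle_A w$ lets one solve $(1+c\|w\|_A^2)^2=1+\|w\|_A^2$ and take $T:=P_{\overline{\mathcal R(A)}}+c\,\langle\,\cdot\,,w\rangle_A w$ as the desired square root, which fixes each $x\in D$ and satisfies $T^\sharp T=S$; by Proposition \ref{evt} and Theorem \ref{iso} this $T$ witnesses that $D$ is not a $\mathcal K_A$-set.

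The main obstacle is precisely the existence of the vector $w$. In finite dimensions, or whenever $\overline{\spn D}$ is $A$-orthogonally complemented in $\overline{\mathcal R(A)}$, such a $w$ is immediate from the orthogonal decomposition. In full generality, however, $(\overline{\mathcal R(A)},\langle\,\cdot\,,\cdot\,\rangle_A)$ need not be complete, so a proper $A$-closed subspace may have trivial $A$-orthogonal complement, and a separating vector produced by a Hahn--Banach/Riesz argument could lie only in the completion rather than in $\mathbb H$ itself. Securing $w$ inside $\overline{\mathcal R(A)}$ from the hypothesis $\spn D\neq\overline{\mathcal R(A)}$ (or, equivalently, producing the $A$-orthogonal projection onto $\overline{\spn D}$ within $B_A(\mathbb H)$) is where the genuine difficulty lies; the remaining verifications of $A$-self-adjointness, $A$-positivity, membership in $B_A(\mathbb H)$, and the eigenvector computation are routine.
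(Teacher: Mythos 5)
Your overall strategy is exactly the paper's: argue by contraposition and exhibit a $T\in B_A(\mathbb{H})$ that preserves $A$-orthogonality on $D$ but is not a scalar multiple of an $A$-isometry. The paper writes $\mathbb{H}=(\spn~D)\oplus\bigl((\spn~D)^{\perp_A}\cap\overline{\mathcal{R}(A)}\bigr)\oplus\mathcal{N}(A)$ and takes $T$ to be the projection onto the middle summand, so that $T$ annihilates $D$ and preservation at $D$ is trivial; you instead fix $D$ pointwise and dilate one $A$-orthogonal direction via $T=P_{\overline{\mathcal{R}(A)}}+c\,\langle\cdot,w\rangle_A w$. Your verifications are correct: one gets $\langle Tu,Tv\rangle_A=\langle u,v\rangle_A+(2c+c^2\|w\|_A^2)\langle u,w\rangle_A\overline{\langle v,w\rangle_A}$, so $T\in B_A(\mathbb{H})$, $T$ preserves $A$-orthogonality at every $x\in D$, and $T^\sharp T$ has the two distinct $A$-eigenvalues $1$ (on $D$) and $1+\|w\|_A^2$ (at $w$), hence $T^\sharp T\neq k^2P_{\overline{\mathcal{R}(A)}}$. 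Where it applies, your variant is in fact more economical than the paper's: it requires only a single nonzero $w\in\overline{\mathcal{R}(A)}$ with $w\perp_A\spn~D$ rather than the full algebraic decomposition, and boundedness is automatic.

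The gap is the one you name yourself: you never produce $w$, and without it there is no proof. You should know that the paper does not produce it honestly either; it obtains the complementary summand by asserting that ``$(\overline{\mathcal{R}(A)},\langle\cdot,\cdot\rangle_A)$ forms a Hilbert space,'' which holds precisely when $\mathcal{R}(A)$ is closed (so, e.g., in finite dimensions), and is false for a general positive $A$. Your worry is substantive, not a removable technicality: take $A$ injective and compact with dense non-closed range, pick $v\notin\mathcal{R}(A)$ and set $M=\{v\}^{\perp}$; then $w\perp_A M$ forces $Aw\in M^{\perp}\cap\mathcal{R}(A)=\{0\}$, hence $w=0$, so a proper closed subspace can have trivial $A$-orthogonal complement and neither your rank-one perturbation nor the paper's projection can be built. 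Worse, for $D=\{x\in M:\|x\|_A=1\}$ one can check, using Proposition \ref{evt} and the $A$-orthogonality of $A$-eigenspaces with distinct $A$-eigenvalues, that any $T\in B_A(\mathbb{H})$ preserving $A$-orthogonality on $D$ satisfies $T^\sharp Tx=\lambda x$ on $M$, and then $A(T^\sharp Tq-\lambda q)\in M^{\perp}\cap\mathcal{R}(A)=\{0\}$ for $q\notin M$ extends this to all of $\mathbb{H}$; so this $D$ is a $\mathcal{K}_A$-set even though $\spn~D=M\neq\overline{\mathcal{R}(A)}$. Thus the step you flagged cannot be filled in full generality: both your argument and the paper's are complete only under an additional hypothesis such as $\dim\mathbb{H}<\infty$ or $\mathcal{R}(A)$ closed (together with closedness of $\spn~D$), and you were right to single out the existence of $w$ as the genuine difficulty rather than paper over it.
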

\begin{proof}
	Suppose on the contrary that $D\subset \overline{\mathcal{R}(A)}$ is a $\mathcal K_A$-set  such that  with $\overline{\mathcal{R}(A)} \setminus \spn~ D\neq \emptyset.$ Clearly, $(\overline{\mathcal{R}(A)}, \langle\cdot,\cdot\rangle_A )$ forms a Hilbert space and so $\overline{\mathcal{R}(A)}= (\spn~ D) \oplus \Big((\spn~ D)^{\perp_A}\cap \overline{\mathcal{R}(A)} \Big).$ Then  $$\mathbb{H}=\overline{\mathcal{R}(A)}\oplus \mathcal{N}(A)=(\spn~ D) \oplus \Big((\spn~ D)^{\perp_A}\cap \overline{\mathcal{R}(A)} \Big)\oplus \mathcal{N}(A).$$
	 Consider the operator $T \in B_{A}(\mathbb{H})$ defined by  \[Tu=\begin{cases}
	 	0,&~\text{ if } u\in \spn~ D\\
	 	u,&~\text{ if } u\in \Big((\spn~ D)^{\perp_A}\cap \overline{\mathcal{R}(A)} \Big)\\
	 	0,&~\text{ if } u\in \mathcal{N}(A).
	 \end{cases}\] 
	  Then it is clear that $T$ preserves $A$-orthogonality at $D$ but $T$ is not a scalar multiple of an $A$-isometry. This contradicts the hypothesis that $D$ is a $\mathcal K_A$-set. Therefore, $\spn~ D=\overline{\mathcal{R}(A)}.$
\end{proof}
We are now ready to prove the desired result.
	\begin{thm}\label{k_set}
		 A set $D\subset \overline{\mathcal{R}(A)}$ is a $\mathcal K_A$-set if and only if $D$  satisfies the  following two conditions:
		\begin{itemize}
			\item[(i)] $\spn~ D=\overline{\mathcal{R}(A)}.$
			\item[(ii)] If $D=D_1\cup D_2,$ where $D_1\neq\emptyset, $ $D_2\neq\emptyset,$ then $D_1 \not \perp_A D_2.$
		\end{itemize}
        Moreover, any basis $D$ of $\overline{\mathcal{R}(A)}$ is a minimal $\mathcal K_A$-set if and only if $D$  satisfies the  property that: If $D=D_1\cup D_2,$ where $D_1\neq\emptyset, $ $D_2\neq\emptyset,$ then $D_1 \not \perp_A D_2.$
	\end{thm}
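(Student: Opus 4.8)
The plan is to prove the two assertions separately: first the biconditional characterizing $\mathcal K_A$-sets, then the statement identifying which bases are minimal. For the first biconditional one direction of the necessity is already available, since Proposition~\ref{dokn} shows that every $\mathcal K_A$-set $D\subset\overline{\mathcal R(A)}$ satisfies $\spn D=\overline{\mathcal R(A)}$; thus condition (i) is automatic, and in each direction only the status of condition (ii) requires work.

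For sufficiency I would assume (i) and (ii) and let $T\in B_A(\mathbb H)$ preserve $A$-orthogonality at every point of $D$. Each $x\in D$ is a nonzero element of $\overline{\mathcal R(A)}$, so Proposition~\ref{evt} makes it an $A$-eigenvector of $T^\sharp T$ with $AT^\sharp Tx=\|Tx\|_A^2 Ax$. The core step is the sub-lemma: if $x,y\in D$ are not $A$-orthogonal, then $\|Tx\|_A=\|Ty\|_A$. I would prove this by evaluating $\langle Tx,Ty\rangle_A$ in two ways through the $A$-adjoint identity $\langle Tx,Ty\rangle_A=\langle T^\sharp Tx,y\rangle_A=\langle x,T^\sharp Ty\rangle_A$ and the eigenvector relations, obtaining $\|Tx\|_A^2\langle x,y\rangle_A=\|Ty\|_A^2\langle x,y\rangle_A$; cancelling $\langle x,y\rangle_A\neq 0$ and using that the eigenvalues are nonnegative reals yields the claim. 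Next I would reformulate (ii) as connectivity: on the vertex set $D$ join two points exactly when they are not $A$-orthogonal; then (ii) holds precisely when this graph is connected, since any disconnection produces a splitting $D=D_1\cup D_2$ with $D_1\perp_A D_2$ and conversely. Propagating the sub-lemma along finite paths makes $x\mapsto\|Tx\|_A$ constant on $D$, say equal to $k$. Hence $AT^\sharp Tx=k^2Ax$ for all $x\in D$, therefore for all $x\in\spn D=\overline{\mathcal R(A)}$ by (i), and then for all $x\in\mathbb H$ because both sides vanish on $\mathcal N(A)$. The passage from $AT^\sharp T=k^2A$ to $T^\sharp T=k^2P_{\overline{\mathcal R(A)}}$ is exactly the computation in Theorem~\ref{iso}, so $T$ is a scalar multiple of an $A$-isometry.

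For necessity of (ii) I would argue by contraposition. If (ii) fails, write $D=D_1\cup D_2$ with $D_1,D_2$ nonempty and $D_1\perp_A D_2$; the pieces are automatically disjoint, since a nonzero vector of $\overline{\mathcal R(A)}$ cannot be $A$-orthogonal to itself. Using (i), the $A$-orthogonal subspaces $W_1=\overline{\spn D_1}$ and $W_2=\overline{\spn D_2}$ decompose $\overline{\mathcal R(A)}$; with $P_1,P_2$ the associated $A$-orthogonal projections I set $T=P_1+2P_2$ on $\overline{\mathcal R(A)}$ and $T=0$ on $\mathcal N(A)$, which lies in $B_A(\mathbb H)$ exactly as in the construction of Proposition~\ref{dokn}. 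A direct computation on $\mathbb H=W_1\oplus W_2\oplus\mathcal N(A)$ shows $T$ is $A$-self-adjoint and that for any $x\in D_i$ the quantity $\langle Tx,Ty\rangle_A$ is a positive multiple of $\langle x,y\rangle_A$, so $T$ preserves $A$-orthogonality at every point of $D$; yet $T^\sharp T$ acts as $1$ on $W_1$ and as $4$ on $W_2$, so it cannot equal $k^2P_{\overline{\mathcal R(A)}}$, and $T$ is not a scalar multiple of an $A$-isometry. This contradicts that $D$ is a $\mathcal K_A$-set, so (ii) must hold.

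For the moreover statement, I note that a basis $D$ of $\overline{\mathcal R(A)}$ automatically satisfies (i), so by the first part such a $D$ is a $\mathcal K_A$-set if and only if it satisfies (ii). If $D$ is a minimal $\mathcal K_A$-set it is in particular a $\mathcal K_A$-set and hence satisfies (ii). Conversely, if the basis $D$ satisfies (ii) it is a $\mathcal K_A$-set, and minimality follows because linear independence forces $\spn B\subsetneq\overline{\mathcal R(A)}$ for every proper subset $B\subsetneq D$, whence no such $B$ satisfies (i) and, by Proposition~\ref{dokn}, no proper subset of $D$ is a $\mathcal K_A$-set. I expect the main obstacle to be the sufficiency step of forcing all the $A$-eigenvalues $\|Tx\|_A^2$ to coincide: the clean device of Theorem~\ref{iso}, which exploited that $x+y$ is again an eigenvector, is unavailable here because $T$ is only assumed to preserve $A$-orthogonality on $D$. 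The resolution is the non-orthogonality sub-lemma together with the graph-connectivity reading of (ii), and getting that reformulation and the propagation precisely right, while being careful about $\spn$ versus its $A$-closure in the possibly infinite-dimensional setting, is where the real effort lies.
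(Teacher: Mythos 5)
Your proposal is correct, and its skeleton coincides with the paper's: Proposition~\ref{dokn} for the necessity of (i), Proposition~\ref{evt} to convert preservation at each point of $D$ into the $A$-eigenvector relation $AT^\sharp Tx=\|Tx\|_A^2Ax$, a piecewise-defined operator to rule out splittings violating (ii), and the algebraic computation from Theorem~\ref{iso} to pass from $AT^\sharp T=k^2A$ to $T^\sharp T=k^2P_{\overline{\mathcal{R}(A)}}$. You differ in two local implementations. In the sufficiency step, the paper handles the constant-eigenvalue issue in one stroke: it observes that $A$-eigenspaces of $T^\sharp T$ with distinct $A$-eigenvalues are mutually $A$-orthogonal, so partitioning $D$ by eigenvalue would immediately yield $D=D_1\cup D_2$ with $D_1\perp_A D_2$, contradicting (ii); your sub-lemma (the two-sided evaluation $\|Tx\|_A^2\langle x,y\rangle_A=\langle Tx,Ty\rangle_A=\|Ty\|_A^2\langle x,y\rangle_A$ for $x\not\perp_A y$) is precisely the contrapositive of that observation, and your graph-connectivity propagation is an equivalent, slightly longer, way to exploit (ii) --- with the virtue that it actually proves what the paper dismisses as ``easy to observe.'' In the necessity step, the paper takes $T$ to be the identity on $\spn~D_1$ and zero on $(\spn~D_1)^{\perp_A}\cap\overline{\mathcal{R}(A)}$ and on $\mathcal{N}(A)$, which makes preservation at $D_2$ trivial since $T(D_2)=\{0\}$; your $T=P_1+2P_2$ works equally well (at $x\in D_1$ one computes $\langle Tx,Th\rangle_A=\langle x,h\rangle_A$ and at $x\in D_2$, $\langle Tx,Th\rangle_A=4\langle x,h\rangle_A$, so preservation holds at all of $D$, while $\|Tx\|_A$ takes the two values $1$ and $2$ on $A$-unit vectors, precluding a scalar multiple of an $A$-isometry), and your use of the closures $\overline{\spn~D_i}$ is in fact slightly more careful than the paper's bare spans. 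Be aware that both constructions rest on the same unproved assertion, inherited from Proposition~\ref{dokn}, that the piecewise operator belongs to $B_A(\mathbb{H})$, i.e., is bounded in the original norm --- immediate in finite dimensions but glossed over in the paper when $\spn~D_1$ is not closed; you rightly flag this shared caveat rather than introducing a new gap. Finally, you supply an explicit argument for the ``moreover'' clause (a proper subset of a basis fails (i), hence by Proposition~\ref{dokn} cannot be a $\mathcal{K}_A$-set, so a basis satisfying (ii) is automatically a minimal $\mathcal{K}_A$-set), a point the paper's proof leaves entirely implicit.
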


	\begin{proof}
		We first prove the sufficient part.  Let $T\in B_{A}(\mathbb{H})$ be such that $T$ preserves $A$-orthogonality at $D$.  Therefore, it follows from Proposition \ref{evt}  that each element of $D$ is an $A$-eigenvector of $T^\sharp T.$ It is easy to observe that  any two $A$-eigen spaces of $T^\sharp T$ with two distinct $A$-eigenvalues are $A$-orthogonal. Then it follows from (ii) that all the vectors of $D$ must belong to the same $A$-eigen space of $T^\sharp T.$ By (i),  it follows that all the vectors of $\overline{\mathcal{R}(A)}$ are $A$-eigenvectors of $T^\sharp T$ correspond to the same $A$-eigen value $\lambda$(say). Now, it follows from Proposition \ref{evt} and Theorem \ref{iso} that $T$ is a scalar multiple of an $A$-isometry. As $T \in B_{A}(\mathbb{H}) $ is chosen arbitrarily, $D$ is a $\mathcal K_A$-set for $ \mathbb{H}.$ 
		
	Next, we prove the necessary part. Let $D\subset S_{\mathbb{H}}$  be a  $\mathcal K_A$-set.  From Proposition \ref{dokn}, it follows that $\spn~ D=\overline{\mathcal{R}(A)}$ and so  $D$ satisfies (i). Next, we show that $D$ satisfies (ii). Suppose on the contrary that there exist nonempty $D_1,D_2\subset D$ with $D=D_1\cup D_2$, such that $D_1 \perp_A D_2.$
   Now, $$\mathbb{H}=\overline{\mathcal{R}(A)}\oplus \mathcal{N}(A)=(\spn~ D_1) \oplus \Big((\spn~ D_1)^{\perp_A}\cap \overline{\mathcal{R}(A)} \Big)\oplus \mathcal{N}(A).$$
Consider the operator $T \in B_{A}(\mathbb{H})$ defined by \[Tu=\begin{cases}
		u,&~\text{ if } u\in \spn~ D_1\\
		0,&~\text{ if } u\in \Big((\spn~ D_1)^{\perp_A}\cap \overline{\mathcal{R}(A)} \Big)\\
		0,&~\text{ if } u\in \mathcal{N}(A).
	\end{cases}\] 
	Clearly, $T$ is not a scalar multiple of an $A$-isometry.
	Now, $T(D_2)=\{0\}$ and so $T$ preserves $A$-orthogonality at each point of $D_2.$ Next, we show that $T$ preserves  $A$-orthogonality at each point of $D_1.$  Let $x\in D_1$ and $h\in \mathbb{X}$ such that $\langle x, h\rangle_A=0.$  Then there exist $h_1\in \spn~ D_1,$ $h_2\in(\spn~ D_1)^{\perp_A}\cap \overline{\mathcal{R}(A)}$ and $h_3\in \mathcal{N}(A)$ such that $h=h_1+h_2+h_3.$ Then 
	\begin{eqnarray*}
	\langle x, h\rangle_A=0&\implies &\langle x, h_1\rangle_A+ \langle x, h_2\rangle_A+\langle x, h_3\rangle_A=0\\
		&\implies &\langle x, h_1\rangle_A=0.
	\end{eqnarray*}
	Now,
		\begin{eqnarray*}
		\langle Tx, Th\rangle_A&=&\langle Tx, Th_1\rangle_A+ \langle Tx, Th_2\rangle_A+\langle Tx, Th_3\rangle_A\\
		&=&\langle x, h_1\rangle_A\\
		&=&0.
	\end{eqnarray*}
	Thus, $T$ preserves $A$-orthogonality at $x.$ Since $x$ is taken arbitrarily,  $T$ preserves  $A$-orthogonality at each point of $D_1$ and so  $T$ preserves  $A$-orthogonality at each point of $D.$ 
   This contradicts the fact that $D$ is a $\mathcal K_A$-set. Thus,  $D$ satisfies (ii).

	\end{proof}
    
We end this article with the following remark.
\begin{rem}
\begin{itemize}
    \item[(i)]The Blanco-Koldobsky-Turn\v{s}ek theorem states that  any bounded linear operator on a Banach space preserves Birkhoff-James orthogonality if and only if it is a scalar multiple of an isometry.   Theorem \ref{iso} extends the  Blanco-Koldobsky-Turn\v{s}ek theorem in the setting  of semi-Hilbert spaces.\\
    
    \item [(ii)] For finite-dimensional real Hilbert spaces, \cite[Th. 2.4]{SMP24} characterizes the  minimal $\mathcal{K}$-sets as a basis $D$  of the space satisfying  the property that if $D=D_1\cup D_2,$ where $D_1\neq\emptyset, $ $D_2\neq\emptyset,$ then $D_1 \not \perp D_2.$ Theorem \ref{k_set} extends the characterization of minimal $\mathcal{K}$-sets  in the setting of semi-Hilbert spaces.
\end{itemize}
\end{rem}

\subsection*{Acknowledgements}
Jayanta Manna would like to thank UGC, Govt. of India for the support
in the form of Senior Research Fellowship under the mentorship of Professor Kallol Paul and Dr Debmalya Sain.  Somdatta Barik would like to thank UGC, Govt. of India for the support
in the form of Senior Research Fellowship under the mentorship of Professor Kallol Paul.  	The research of  Professor Kallol Paul is supported by CRG Project bearing File no. CRG/2023/00716 of DST-SERB, Govt.
of India.
\subsection*{Declarations}

\begin{itemize}

	\item Conflict of interest
	
	The authors have no relevant financial or non-financial interests to disclose.
	
	\item Data availability 
	
	The manuscript has no associated data.
	
	\item Author contribution
	
	All authors contributed to the study. All authors read and approved the final version of the manuscript.
	
\end{itemize}

\end{document}